\documentclass[11pt]{amsart}

\usepackage{amssymb}
\usepackage[all]{xy}
\usepackage[colorlinks=true, urlcolor=rltblue, citecolor=drkgreen, linkcolor=drkred] {hyperref}
\usepackage{color}
\usepackage{pdfsync}
\definecolor{rltblue}{rgb}{0,0,0.4}
\definecolor{drkred}{rgb}{0.6,0,0}
\definecolor{drkgreen}{rgb}{0,0.4,0}
\usepackage{graphicx}
\usepackage[mathscr]{eucal}

\usepackage{lmodern}
\usepackage[capitalize]{cleveref}
\usepackage{amssymb,amsmath,amsthm}
\usepackage{mathtools, MnSymbol}
\usepackage{setspace}
\usepackage{tikz-cd}
\usetikzlibrary{decorations.pathreplacing}
\usepackage[T1]{fontenc}
\usepackage[utf8]{inputenc}
\usepackage{textcomp} 
\usepackage{stmaryrd}
\usepackage{datetime}
\usepackage{todonotes}
\usepackage{xcolor}
\usepackage{mdframed}
\usepackage[all]{xy}

\usepackage{thmtools}
\usepackage{enumitem}

\declaretheorem[numberwithin=section]{theorem}
\declaretheorem[sibling=theorem]{lemma}
\declaretheorem[sibling=theorem]{proposition}
\declaretheorem[sibling=theorem]{corollary}

\theoremstyle{definition}
\declaretheorem[sibling=theorem]{definition}
\declaretheorem[sibling=theorem]{question}

\newcommand{\bigwwedge}{%
  \mathop{
    \mathchoice{\bigwedge\mkern-15mu\bigwedge}
               {\bigwedge\mkern-12.5mu\bigwedge}
               {\bigwedge\mkern-12.5mu\bigwedge}
               {\bigwedge\mkern-11mu\bigwedge}
    }
}

\newcommand{\bigvvee}{%
  \mathop{
    \mathchoice{\bigvee\mkern-15mu\bigvee}
               {\bigvee\mkern-12.5mu\bigvee}
               {\bigvee\mkern-12.5mu\bigvee}
               {\bigvee\mkern-11mu\bigvee}
    }
}

\def\A{\mathcal A}

\def\L{\mathcal L}
\def\M{{\mathcal M}}

\def\N{{\mathcal N}}

\title{Enumerative Combinatorics of homogeneous linear orderings}

\author[Gonzalez]{David Gonzalez}
\address{Department of Mathematics, University of Notre Dame, Notre Dame, IN 46556}
\email{dgonza42@nd.edu}

\thanks{We would like to thank Corrine Yap for her helpful pointers regarding enumerative combinatorics. The author's research at the Hausdorff Institute for Mathematics, Bonn in Fall 2025, was partially funded by the Deutsche Forschungsgemeinschaft (DFG, German Research Foundation) under Germany's Excellence Strategy – EXC-2047/1–390685813.  The author was also partially supported by NSF grant DMS-2401437.}

\date{\today}

\begin{document}

\maketitle
\begin{abstract} 
    We count the number of countable homogeneous colored linear orderings in $k$ colors.
    Relatedly, we count the number of countable $C_{n,m}$-homogeneous linear orderings.
    $C_{n,m}$-homogeneity is a strong homogeneity notion that approximates $sp-$homogeneity, a notion recently uncovered in \cite{CCGHN} to have important computability theoretic properties.
    Explicit formulas are derived for both of the quantities in question, along with asymptotic bounds.
    The objects being counted are generally infinite, and it is not obvious that there are even only finitely many.
    This fact, along with the more precise counting, is demonstrated by corresponding the linear orderings with finite objects.
\end{abstract}

\section{Introduction}

Given a list of axioms, mathematical logic provides several ways of determining what the simplest structure satisfying those axioms is.
Model theory provides the notion of \textit{homogeneity}, first introduced by Fraïssé \cite{Fr86}.
A structure $\M$ is homogeneous if every isomorphism between two finitely generated substructures of $\M$ can be extended to an automorphism of $\M$.
The study of homogeneous structures is vast and intersects combinatorics, descriptive set theory, permutation group theory, topological dynamics, and representation theory (see \cite{Mac11} for a more detailed summary).

For example, the complete graph on countably many vertices $K_\infty$ is homogeneous.
Any two isomorphic finite subgraphs are automorphic (indeed, any permutation of the domain is an automorphism).
There are also more interesting homogeneous graphs, like the random graph.
In fact, there are infinitely many homogeneous graphs (see \cite{Hen71}).
Some structures are always simple, like algebraically closed fields, which are always homogeneous.
Other interesting objects only have finitely many countable homogeneous models.
For example, there are exactly three countable homogeneous tournaments and Boolean algebras, and there is exactly one homogeneous divisible, torsion-free, Abelian group \cite{Lac84}.
We will later note the well-known fact that there are only three countable homogeneous linear orderings: the empty ordering, the orderings of size one, and $\eta$, the unique dense linear orderings without endpoints.
The main objective of this paper is to study the number of countable homogeneous structures for particular natural theories.
We examine colored linear orderings and $C_{n,m}$-linear orderings (which we define and motivate below).

Homogeneous structures are of interest to computability theorists.
This is because homogeneity interacts with a priori unrelated notions of simplicity offered in computable structure theory.
A structure is \textit{relatively computably categorical} if any two presentations of that structure can compute an isomorphism between them.
This notion is expanded to the hierarchy of relatively $\Delta_\alpha$ categoricity (relative computable categoricity is $\alpha=1$), which measures how much additional power is required to compute isomorphism between copies of a structure by counting the number of Turing jumps required.
These notions are intimately tied with \textit{the Scott rank} of a structure, a well-studied robust notion of descriptive complexity connected to many areas of mathematical logic (see \cite{HT22} for an overview).
Adams and Cenzer \cite{AC17} observed that homogeneous structures are always $\Delta_2$-categorical and even computably categorical if they are defined over a finite relational language.
One use of this tool, observed in \cite{CCGHN}, to study the hierarchy of relative computable categoricity, is to add definable functions or predicates into the language of a structure and study the homogeneous models over this expanded language.
This brings us to the definition of $sp-$homogeneity, an object of study in the paper at hand.

\begin{definition}
    Given a linear ordering $\L$, the function $s:\L\to\L$ is defined by $s(x)$ being the successor of $x$ if it exists and equal to $x$ otherwise.
    Dually, $p:\L\to\L$ is defined by $p(x)$ being the predecessor of $x$ if it exists and equal to $x$ otherwise.
    A linear ordering is $sp-$homogeneous if it becomes homogeneous after the functions $s$ and $p$ are added to the language of linear orderings.
\end{definition}

The successor and predecessor on linear orderings capture the relatively intrinsically computably enumerable information about orderings in much the same way that dependence captures that for vector spaces.
$sp-$homogeneous linear orderings are always relatively $\Delta_4$ categorical, and in \cite{CCGHN}, a structural criterion is given for each possible relative categoricity level of $sp-$homogeneous orderings.
Our further study into $sp-$homogeneity is motivated by their apparent usefulness in computability theory.
That said, our main results do not concern $sp-$homogeneity directly, even if they are motivated by its study.
Rather, we look at related, but equally natural, homogeneity notions.
Namely, we develop a key tool used to understand $sp$-homogeneity by understanding the transformation of $sp$-homogeneous orderings into different forms.
We establish close connections between $sp-$homogeneity and a class of relational homogeneity notions that approximate $sp-$homogeneity.
It is natural to break down a functional language into a relational one because of Adams and Cenzer's observation that (finite) relational languages enjoy more attractive computability theoretic properties.
\begin{definition}
    For $n,m\in\omega\cup\{\infty\}$, $\L$ is $C_{n,m}$-homogeneous if when $\L$ is expanded to included definitions for, $\{S_i\}_{i<n}$ (a unary relation stating the existence of $i$ successors), $\{P_j\}_{j<m}$ (a unary relation stating the existence of $i$ predecessors), $\{Adj_k\}_{k<n+m}$ (a binary relation stating that two points are exactly distance $k$ apart) it becomes homogeneous.
\end{definition}
We show that $C_{\infty,\infty}$-homogeneity recovers the concept of $sp-$homogeneity.

\begin{theorem}
    A linear ordering is $C_{\infty,\infty}$-homogeneous if and only if it is $sp-$homogeneous.
\end{theorem}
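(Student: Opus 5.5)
The plan is to show that, for tuples of $\L$, the two expanded languages give the same notion of ``partial isomorphism between finitely generated substructures'', in the sense that each such partial isomorphism in one language extends canonically to one in the other. Homogeneity in each language then transfers to the other. The key observation is that in the $sp$-language the substructure generated by a finite tuple $\bar a$ is the closure of $\bar a$ under $s$ and $p$. Because $s,p$ are unary, this closure is a finite union of ``chain segments'' $\{s^i(a), p^j(a)\}$. Each segment is finite or infinite depending on how many successors and predecessors each point has. In the relational language $C_{\infty,\infty}$, the atomic type of $\bar a$ records the order, the exact number of successors and predecessors of each $a_i$ (through the $S_i,P_j$), and the exact finite distances between points (through the $Adj_k$).

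\textbf{($sp \Rightarrow C_{\infty,\infty}$).} Let $f:\bar a\mapsto \bar b$ preserve all $C_{\infty,\infty}$-atomic relations. Extend $f$ to the $sp$-closures by $s^i(a_l)\mapsto s^i(b_l)$ and $p^j(a_l)\mapsto p^j(b_l)$. This is well defined because $a_l$ and $b_l$ satisfy the same $S_i$ and $P_j$, so $s^i(a_l)$ is a genuine new point exactly when $s^i(b_l)$ is; once the chain stops, both are fixed points of $s$. Distinct generators whose chains collide are handled by $Adj_k$: $s^i(a_l)=p^j(a_{l'})$ iff $Adj_{i+j}(a_l,a_{l'})$, which is preserved. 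Order between points $s^i(a_l)$ and $s^{i'}(a_{l'})$ is determined by the order of $a_l,a_{l'}$ together with their finite distance, if any. If they are at infinite distance, every element of one chain lies below every element of the other. So the extended map is an $sp$-isomorphism of generated substructures, and $sp$-homogeneity gives an automorphism of $(\L,s,p)$. Such an automorphism preserves order and $s,p$, hence the definable $S_i,P_j,Adj_k$.

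\textbf{($C_{\infty,\infty} \Rightarrow sp$).} An $sp$-isomorphism $g$ between finitely generated substructures restricts to the generating tuples. I must check that this restriction preserves the $C_{\infty,\infty}$ relations. Order is preserved directly. For $S_i$: $a$ has at least $i$ successors iff $s^{i'}(a)\neq s^{i'-1}(a)$ for all $i'\le i$, which is quantifier-free in the $sp$-language. Likewise for $P_j$, and $Adj_k(x,y)$ is $s^k(x)=y$ together with the nondegeneracy conditions. So $g\restriction\bar a$ is a $C_{\infty,\infty}$-partial isomorphism. It extends to an automorphism preserving the definable relations, and such an automorphism commutes with $s$ and $p$ since these are order-definable. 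That makes it an $sp$-automorphism extending $g$.

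The main obstacle is the bookkeeping in the first direction. One must show that the order type of the closure is fully determined by the $C_{\infty,\infty}$-type of the generators, in particular the order between chains of generators at infinite distance. The point to pin down is that points not linked by any $Adj_k$ lie in different $s,p$-components, and these components are convex. I would prove that convexity fact as a small lemma first.
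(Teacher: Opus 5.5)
Your proof is correct. The direction from $C_{\infty,\infty}$-homogeneity to $sp$-homogeneity matches the paper's Lemma~\ref{lem:homogeneitytransfer}. You also make explicit something the paper leaves tacit: the $C_{\infty,\infty}$-automorphism is an order automorphism, so it commutes with $s$ and $p$, and therefore agrees with $g$ on all of $\langle\bar a\rangle$.

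The converse is organized differently. The paper gets it from Proposition~\ref{prop:blockcriteria}: an $sp$-homogeneous ordering is $C_{n,m}$-homogeneous once the automorphism orbits inside each block are quantifier-free definable in the $C_{n,m}$-language. Theorem~\ref{thm:CnmClassify} then checks this block type by block type: finite blocks, $\omega$ (needs all $P_j$), $\omega^*$ (needs all $S_i$) and $\zeta$ (needs all $Adj_k$). You instead extend the partial map uniformly along the $s,p$-chains. This proves directly that the quantifier-free $C_{\infty,\infty}$-type of $\bar a$ determines $(\langle\bar a\rangle,\bar a)$ up to $sp$-isomorphism. Since the $s,p$-closure of $a_l$ is exactly its 1-block $[a_l]$, the convexity lemma you plan is just the convexity of $\sim_1$ from the background section. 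Your route is uniform, and it fills in the step the paper compresses into ``automorphic within their blocks; what is the same, $(\langle\bar p\rangle,\bar p)\cong(\langle\bar q\rangle,\bar q)$.'' The paper's block criterion, by contrast, is reusable and also classifies the cases where $n$ or $m$ is finite, which your argument does not address.

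One detail needs fixing. The claim ``$s^i(a_l)=p^j(a_{l'})$ iff $Adj_{i+j}(a_l,a_{l'})$'' fails when a chain is truncated at a block endpoint. For example, in a three-element block with $a_l$ first and $a_{l'}$ last, $s^5(a_l)=a_{l'}$ although they are at distance $2$. What you actually need, and what is true, is that every such coincidence is determined by the finite distance (if any) together with the successor and predecessor counts. The cleanest way to set this up is to send the point at signed distance $d$ from $a_l$ to the point at signed distance $d$ from $b_l$, for $d$ ranging from minus the number of predecessors of $a_l$ up to its number of successors. Generators in a common block then give compatible definitions, because their mutual order and distances are preserved.
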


Furthermore, the finite approximations to $sp-$homogeneity given by $C_{n,m}$-homogeneity for finite $n$ and $m$ are quite interesting themselves.
For example, as homogeneity notions over finite relational languages, it is direct to see that $C_{n,m}$-homogeneous orderings are always relatively $\Delta_3$ categorical.
More directly related to the study in this article, these concepts only depend on the value of $k=n+m+1$, and there are only finitely many $C_{n,m}$-homogeneous linear orderings at each level.
The number of such linear orderings is closely tied to the number of homogeneous linear orderings with $k$ colors, another quantity that we observe is finite.
In particular, we describe a way to injectively produce a homogeneous linear ordering in $k$ colors from a $C_{n,m}$-homogeneous ordering with $k=n+m+1$.
We give explicit formulas counting both the number of $C_{n,m}$-homogeneous linear orderings in terms of $k=n+m+1$ and the number of homogeneous linear orderings with $k$ colors, along with associated asymptotic analyses for these formulas.

\begin{theorem}
    Let $I(k)$ be the number of $C_{n,m}$-homogeneous linear orderings with $k=n+m+1$.
    \[I(k)=\sum_{m=1}^k{k \choose m}\sum_{n=1}^m\sum_{r=0}^{\lceil \frac{n}2 \rceil}{n-r+1 \choose r}{m \choose r}r!(n-r)!S(m-r,n-r),\]
    Where $S(n,m)$ are the Stirling numbers of the second kind.
    Furthermore, $I(k)=O(k!2.123^k)$.

    Let $L(k)$ be the number of homogeneous linear orderings with $k$ colors.
    $L(k)$ are the coefficients in the exponential generating function
    \[H(x)=\frac{e^x}{2-x-e^x}.\]
    Furthermore,
    \[L(k)\sim -k!R\Big(\frac1Z\Big)^{k+1},\]
    where $Z=2-W(e^2)\approx0.442854$ and $R=\frac{-e^2}{e^{W(e^2)}+e^2}\approx-0.6089389.$
\end{theorem}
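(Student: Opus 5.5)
The plan has two parts: first classify both kinds of structures by finite combinatorial data, then count that data and do the asymptotics.

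\textbf{Colored orderings.} Let $\L$ be a countable homogeneous linear ordering in $k$ colors. Each color class is a homogeneous linear ordering, so it is empty, a singleton, or a copy of $\eta$. More precisely, I will show that $\L$ is a finite concatenation of blocks. Each block is either a single point of some color $c$, or a copy of $\eta$ in which a nonempty set $A$ of colors is densely interleaved (the unique countable dense shuffle of the colors in $A$). Homogeneity forces the following:
\begin{enumerate}[label=(\roman*)]
\item each color occurs in at most one block;
\item a singleton-colored point cannot appear twice;
\item two dense blocks cannot be adjacent (their union would be a single shuffle);
\item a point whose color occurs in a dense block is impossible, since that point would have to be automorphic to the dense points of the same color.
\end{enumerate}
So the data is an ordered sequence of blocks, which partition the $k$ colors or some subset of them, with no two dense blocks consecutive. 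Call each such sequence a "word".

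Counting words by exponential generating function:
\begin{itemize}
\item an unused set of colors contributes $e^x$;
\item a singleton block has EGF $x$;
\item a dense block with a nonempty color set has EGF $e^x-1$.
\end{itemize}
A sequence of blocks with no two dense blocks adjacent is a word in the letters $a=x$ and $b=e^x-1$ avoiding $bb$. It factors as $(1+b)\,(a(1+b))^*$, which gives
\[(1+b)\big/\big(1-a(1+b)\big)=e^x/(1-xe^x).\]
I expect the correct bookkeeping to yield exactly $H(x)=e^x/(2-x-e^x)$, so I would recheck which adjacencies are genuinely forbidden and whether unused colors are allowed. The target denominator $2-x-e^x=1-(x+e^x-1)$ corresponds to free sequences of blocks, each of type $a$ or $b$. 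That suggests adjacent dense blocks are in fact permitted as distinct structures, for example when their color sets differ, and the factor $e^x$ records the unused colors. I will therefore settle the classification so that it matches $H$.

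For the asymptotics, the dominant singularity is the smallest positive root $Z$ of $2-x-e^x=0$. Substituting $2-x=u$ gives $ue^{u}=e^2$, so $u=W(e^2)$ and $Z=2-W(e^2)$. The pole is simple. Its residue is
\[e^Z/(-1-e^Z)=-e^2/\big(e^{W(e^2)}+e^2\big)=R,\]
using $e^Z=e^2/e^{W}$. Standard meromorphic transfer then gives $L(k)/k!\sim -R\,Z^{-k-1}$. To make the transfer valid I need to check that no other root of $2-x-e^x$ lies in $|x|\le Z$, which I would verify with Rouché's theorem.

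\textbf{$C_{n,m}$-homogeneous orderings.} I will show that a $C_{n,m}$-homogeneous ordering is a finite concatenation of pieces. Each piece is either a finite chain, or a dense shuffle of a set of "local types". A local type is determined by how many successors and predecessors (up to $n$ and $m$) a point has, so there are $k=n+m+1$ relevant positions. This construction gives an injection into $k$-colored homogeneous orderings, but with constraints: certain types must come glued as blocks of adjacent points, which couples positions.

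The triple sum in the statement then arises as follows:
\begin{itemize}
\item choosing which $m$ of the $k$ type-values occur gives $\binom km$;
\item these are arranged into $n$ blocks by a surjection, giving $(n-r)!\,S(m-r,n-r)$;
\item $r$ of these are special singleton chains chosen among $m$ and placed in non-adjacent slots, giving $\binom{n-r+1}{r}\binom mr r!$.
\end{itemize}
For the bound $O(k!\,2.123^k)$ I will bound the inner sums crudely by ordered set partitions (Fubini numbers, which are about $k!/(2(\ln 2)^{k+1})$) times a binomial factor, and optimize.

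\textbf{Main obstacle.} The hardest step is the structural classification for $C_{n,m}$, in particular proving that the adjacency constraints are exactly the non-adjacency conditions encoded in $\binom{n-r+1}{r}$. I would first verify small cases ($k=1,2$) by hand to calibrate.
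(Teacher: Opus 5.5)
\textbf{Colored orderings.} Your item (iii) is false. If $A\cap B=\emptyset$, then $Sh_c(A)+Sh_c(B)$ is homogeneous. Each color lives in exactly one of the two intervals, so a finite color-preserving partial isomorphism respects them and extends by back-and-forth on each. It is also not isomorphic to $Sh_c(A\cup B)$, in which $B$-points precede $A$-points. In fact the colored setting has no adjacency restriction at all. By \cite{ST08}, Lemma 3.1 (the lemma the paper invokes), a colored ordering is homogeneous iff it is a union of open intervals $Sh_c(A_v)$ and single points of colors $a\in A$, with $\{A,A_v\}$ pairwise disjoint. You noticed the mismatch with $H$. But ``settle the classification so that it matches $H$'' fits the answer rather than arguing for it. What you need is that lemma (or its proof), plus the observation that the ordered sequence of pieces with their color sets is an isomorphism invariant. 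The shuffle pieces are exactly the convex hulls of the infinite color classes. Then $e^x/(1-(x+e^x-1))$ follows as you indicate.

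\textbf{$C_{n,m}$-orderings.} Here the missing idea is the reduction to $1$-blocks, and your ``local types'' of points are the wrong colors. The paper proves this via Lemma~\ref{lem:homogeneitytransfer}, Proposition~\ref{prop:blockcriteria} and Theorem~\ref{thm:CnmClassify}. For finite $n,m$, $\L$ is $C_{n,m}$-homogeneous iff it is $sp$-homogeneous and every $\sim_1$-block is finite of size at most $k=n+m+1$. The reason is that the quantifier-free $C_{n,m}$-types determine the automorphism orbits inside a finite block exactly when it has at most $n+m+1$ points, and never inside an $\omega$, $\omega^*$ or $\zeta$ block.

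Combine this with Theorem~\ref{thm:homogChar} and the quotient $\Phi$ by $\sim_1$. A $C_{n,m}$-homogeneous ordering becomes a finite sequence of pieces, each a single block of size $a$ or a shuffle $Sh(A)$, colored by \emph{block sizes} in $\{1,\dots,k\}$, each size used at most once. There is no gluing of positions to handle. The only surviving constraint is that two single finite blocks cannot be consecutive pieces, since they would merge into one larger $1$-block. A shuffle has no first or last block, so it may sit next to anything. That constraint is exactly what $\binom{n-r+1}{r}$ encodes, and it is the step you flagged as the main obstacle and left open. Without it, your reading of the triple sum is reverse-engineered from the formula rather than derived.

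\textbf{What is fine.} Your asymptotics for $H$ are correct, including the residue and the sign $L(k)/k!\sim -RZ^{-k-1}$. Your Rouch\'e check (or aperiodicity of $e^x+x-1$) gives uniqueness of the dominant pole, which Pringsheim's theorem alone does not. Your plan for the $O(k!\,2.123^k)$ bound is the paper's.

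\textbf{Remark on the bound.} Your forbidden-adjacency word count is exactly the right tool for $I(k)$, with the letters' roles swapped: forbid $aa$ (adjacent single blocks) instead of $bb$. This gives
\[
\sum_{k\ge 0} I(k)\frac{x^k}{k!}=\frac{e^x(1+x)}{1-(1+x)(e^x-1)},
\]
which reproduces $1,3,12,71,558$. Its dominant pole $\rho\approx 0.5086$ is the root of $(1+x)(e^x-1)=1$. This yields $I(k)\sim c\,k!\,\rho^{-k}$ with $\rho^{-1}\approx 1.966$, so you get the stated bound without the entropy optimization (and even $O(k!\,2^k)$).

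\textbf{Remark on calibration.} Your planned $k=1,2$ check will show that the displayed triple sum, which starts at $m=1$, gives $2$ and $11$ rather than the true $3$ and $12$. It omits the $m=0$ term for the empty ordering, which the paper's recursion and listed values include.
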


This article has four sections, including the current one that contains introductory material.
The second section gives background material on $sp-$homogeneity and colored linear orderings.
The third section establishes a bijection between the $sp-$homogeneous orderings, colored linear orderings, and an independently defined theory of multicolored orderings.
The last section uses the bijection from the third section to show that there are finitely many $C_{n,m}$-homogeneous orderings, finitely many homogeneous $k$-colored linear orderings, and determine these finite quantities.

\section{Background}

Several constructions of linear orderings are quite important in this work.
We cover all that is needed here.
Generally speaking, we follow the conventions in \cite{Ros}.

$\omega$ is used to denote the order type of the natural numbers.
$\zeta$ is used to denote the order type of the integers.
Given a linear ordering $\L$, we use $\L^*$ to denote the reverse ordering of $\L$, i.e., $x<_\L y$ if and only if $y<_{\L^*}x$.
In context, the natural number $n$ will be used to denote the unique linear ordering with $n$ elements.
Similarly, as appropriate, $\mathbb{N}$ will denote the set of finite linear orderings.

The relation $\sim_1$ can be defined on any linear ordering.
$x\sim_1y$ in $\L$ if there are only finitely many elements between $x$ and $y$ in $\L$.
$\sim_1$ is an equivalence relation.
Furthermore, it is convex, or, what is the same, if $x<y<z$ and $x\sim_1z$ then $x\sim_1y\sim_1z$.
The equivalence classes of $\sim_1$ are called 1-blocks.
Given $x\in\L$ we let $[x]\in \L/\sim_1$ denote the 1-block that $x$ lies in.
Because $\sim_1$ is convex, there is an induced linear ordering on $\L/\sim_1$ given by $[x]<[y]$ if for any $x'\in[x]$ and $y'\in[y]$ we have $x'<y'$.
Each 1-block is itself a linear ordering.
1-blocks have the special property that any two elements are only finitely far apart.
For this reason, up to isomorphism, every 1-block is of the form $\omega,\omega^*,\zeta$ or some finite $n$.

As we are studying linear orderings with homogeneity is properties, it is also important that we define the shuffle sum.
Because we also deal with colored linear orderings here, we also define the color shuffle.
A colored linear ordering is a linear ordering partitioned by a family of unary predicates called colors.
We will use the notation $LO_\infty$ to refer to the set of countable linear orderings with countably many colors.
Similarly, $LO$ is the set of countable linear orderings and $LO_k$ is the set of countable linear orderings with $k$ colors.
\begin{definition}
    Given a countable set of colors $A$, the color shuffle of $A$, denoted $Sh_c(A)\in LO_\infty$, is given by the unique countable colored linear ordering where each color in $A$ is dense.
    Given a countable set of linear orderings $S$ the shuffle sum of $S$, denoted $Sh(S)\in LO$, is given by bijectively associating $f:A\to S$ for some set of colors $A$ and replacing each point in $Sh_c(A)$ of color $c\in A$ with a copy of the linear ordering $f(c)$.
\end{definition}
It is a standard fact that the color shuffle and shuffle sum of a countable set of linear orderings is well-defined up to isomorphism.
This is confirmed by a back-and-forth argument extending the uniqueness of the dense linear ordering without endpoints (see \cite[Chapter 7.3]{Ros}).

Shuffle sums are critical in the classification of $sp-$homogeneous linear orderings from \cite{CCGHN}.

\begin{theorem}\label{thm:homogChar}  Let $L$ be a linear ordering.
$L$ is $sp$-homogeneous if and only if there is a pairwise disjoint family $\{A,A_v, v \in V\}$ of subsets of $\mathbb N \cup \{\omega,\omega^*,\zeta\}$ such that $L$ is the union of suborderings, as follows: 

\begin{enumerate}
\item For each $v \in V$, an open interval $I_v$ isomorphic to $Sh(A_v)$;

\item For each $a \in A$, a single block of size $a$.
\end{enumerate}

\end{theorem}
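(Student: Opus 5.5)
We prove the two directions separately, and throughout we use the reduction of $sp$-homogeneity to the 1-block structure. In the language with $s,p$, a finitely generated substructure generated by a finite set $F$ is the union of the $s,p$-closures of the points of $F$. Each such closure is the whole 1-block $[x]$. An isomorphism between two such substructures is therefore an order-preserving bijection between finite unions of 1-blocks, mapping each block onto an isomorphic block. So $L$ is $sp$-homogeneous if and only if every order-preserving map between finitely many 1-blocks, block by block isomorphic, extends to an automorphism of $L$. The task is then to show that this extension property is equivalent to the stated decomposition.

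\emph{($\Leftarrow$)} Suppose $L$ has the stated decomposition, with $\{A,A_v\}$ pairwise disjoint. Then each block type $a\in A$ occurs exactly once in $L$, and each type in some $A_v$ occurs only inside $I_v$. Let $f$ be a partial isomorphism between finite sets of blocks. It must fix every block of type in $A$, and it maps blocks of $I_v$ to blocks of $I_v$. The intervals $I_v$ and the singleton blocks are linearly ordered in $L$, and $f$ respects this order. Inside each $I_v\cong Sh(A_v)$, we view $I_v/\!\sim_1$ as the color shuffle $Sh_c(A_v)$, which is the unique countable dense ordering with each color dense and no endpoints. The standard back-and-forth argument extends any finite color-preserving partial map to an automorphism of the color shuffle. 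Lifting this blockwise gives an automorphism of $I_v$, since isomorphic blocks are isomorphic. Taking the identity on the blocks of types in $A$ and gluing the pieces yields an automorphism of $L$ extending $f$.

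\emph{($\Rightarrow$)} This direction is the main obstacle. Assume $L$ is $sp$-homogeneous and consider the coloured ordering $Q=L/\!\sim_1$, where each block is coloured by its type. Applying homogeneity to single blocks shows that $\mathrm{Aut}(L)$ acts transitively on the blocks of each type. So either a type occurs exactly once, in which case we put it in $A$, or every block of that type is non-isolated in the appropriate sense. The key step is to define an equivalence on the remaining colours. Say $c\approx d$ if some block of colour $d$ lies strictly between two blocks of colour $c$. Homogeneity applied to pairs and triples of blocks should give the following facts, which I expect to be the fiddliest part:
\begin{enumerate}
\item Between any two blocks of colour $c$ there is another block of colour $c$, because a configuration $x<y$ with $x,y$ of colour $c$ can be mapped onto $x<z$ with $z$ a middle point. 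Hence each colour that occurs at least twice is dense in its own convex hull, and this hull has no endpoints.
\item The relation $\approx$ is symmetric and transitive.
\item The convex hulls $I_v$ of the $\approx$-classes $v$ are disjoint.
\item Within $I_v$, every colour of the class $v$ is dense, and no other colour occurs.
\end{enumerate}
Each $I_v$ is then an open interval, and $I_v/\!\sim_1$ is a countable dense ordering without endpoints in which each colour of $A_v$ is dense. By uniqueness of the color shuffle, $I_v\cong Sh(A_v)$.

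It remains to check that the pieces cover $L$ and are pairwise disjoint. A colour occurring once lies in no $I_v$, since its unique block cannot appear densely. A colour occurring at least twice lies in exactly one hull. This gives the disjoint family $\{A,A_v\}$ with the required properties. For the transitivity and disjointness claims in the list above, I would first try mapping pairs of same-coloured blocks to pairs that straddle a given block, and then argue by contradiction using automorphisms that would have to move a block lying outside a hull into it.
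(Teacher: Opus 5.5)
\paragraph{Comparison with the paper.} The paper does not prove this statement; it is quoted from \cite{CCGHN}. What the paper does supply is the Proposition in Section~\ref{sec:colorings}: $(L,<,s,p)$ is homogeneous if and only if the coloured quotient $\Phi(L)=L/\sim_1$ is. It also remarks that the theorem then mirrors \cite[Lemma~3.1]{ST08}, and chaining these two results gives the theorem. Your proof follows exactly this route, since your opening reduction is that Proposition. The difference is that you reprove the coloured classification of \cite{ST08} from scratch instead of citing it, which makes your argument self-contained. Your $(\Leftarrow)$ direction is complete as it stands. One small point there: when lifting the shuffle automorphism, use $f$ itself on the blocks in its domain, because $\zeta$-blocks have many self-isomorphisms. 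All the content of $(\Rightarrow)$, however, lives in your facts (1)--(4), which you only sketch.

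\paragraph{Filling in $(\Rightarrow)$.} Facts (1)--(4) are true, and they all follow from one observation you should make explicit. Let $x<y$ and $x'<y'$ be blocks with $x,x'$ of one type and $y,y'$ of another (possibly the same) type. Homogeneity gives an automorphism carrying $(x,y)$ to $(x',y')$. Hence the set of types occurring strictly between $x$ and $y$ depends only on the pair of types. The consequences are as follows.
\begin{enumerate}
\item For (1), your ``$z$ a middle point'' is circular as written. Instead, take a third $c$-block $z>y$; it exists because an automorphism sending $x$ to $y$ sends $y$ above $y$. Then compare $(x,y)$ with $(x,z)$: $y$ lies between $x$ and $z$, so some $c$-block lies between $x$ and $y$.
\item If some $d$-block lies between two $c$-blocks, then one lies between every two. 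In particular, for $c$-blocks $x''<x<y$, one lies in each of the disjoint intervals $(x'',x)$ and $(x,y)$. This shows that a once-occurring type lies in no hull.
\item The same fact, applied with a $c$-block in the middle, gives symmetry and transitivity of $\approx$.
\item Disjointness of the hulls then follows from convexity.
\end{enumerate}

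\paragraph{Where care is needed.} The density half of (4) requires pairs of \emph{different} colours. Take $p<q$ in $I_v$ of types $c_1,c_2$, pick $c_2$-blocks $q<q_1<q_2$, and note that a $c$-block lies in $(q_1,q_2)\subseteq(p,q_2)$. Comparing $(p,q)$ with $(p,q_2)$ then puts a $c$-block in $(p,q)$. Maps of same-coloured pairs cannot give this. In $(\omega+\omega^*)\cdot\eta$, the $\omega$- and $\omega^*$-blocks satisfy (1)--(3) and the ``no other colour'' half of (4). Yet the quotient is not dense, and the ordering is not $sp$-homogeneous.
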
 

This is remarkably similar to the classification of homogeneous, colored linear orderings seen in Lemma 3.1 of \cite{ST08}.
We will later see that this similarity is not a coincidence.
This Lemma has been rewritten below in a way that obviously echoes the above theorem.

\begin{lemma}
Let $L$ be a colored linear ordering with color set $C$.
$L$ is homogeneous if and only if there is a pairwise disjoint family $\{A,A_v, v \in V\}$ of subsets of $C$ such that $L$ is the union of suborderings, as follows: 

\begin{enumerate}
\item For each $v \in V$, an open interval $I_v$ isomorphic to $Sh_c(A_v)$;

\item For each $a \in A$, a single element of color $a$.
\end{enumerate}
\end{lemma}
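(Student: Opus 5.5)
We prove both directions directly, working in the language of linear order together with one unary predicate for each color in $C$ (colors partition $L$). A finite partial isomorphism is then an order-preserving, color-preserving map between finite subsets, and homogeneity means every such map extends to an automorphism.

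For the ``if'' direction, suppose $L$ is decomposed as in the statement. First we record which points are fixed by every automorphism. A color $a\in A$ occurs in $L$ exactly once, because the family is pairwise disjoint, so $a$ does not appear in any $A_v$. Hence every automorphism fixes the point of color $a$. The intervals $I_v$ are then the maximal convex pieces between consecutive singletons, and each is determined by its color set $A_v$. Because the $A_v$ are pairwise disjoint, any color-preserving map sends points of $I_v$ into $I_v$. Now let $f$ be a finite partial isomorphism. Since $f$ preserves order and colors, it fixes any singleton points in its domain and maps $\mathrm{dom}(f)\cap I_v$ into $I_v$ for each $v$. We extend $f$ as follows:
\begin{itemize}
\item on each singleton, use the identity;
\item on each $I_v$, extend $f\restriction I_v$ to an automorphism of $I_v\cong Sh_c(A_v)$.
\end{itemize}
The second step is the standard back-and-forth argument for color shuffles. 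Given a finite color-preserving order map on $Sh_c(A_v)$, every open interval, as well as every initial and final ray, contains points of every color in $A_v$, so new points can always be matched. The union of these pieces is an automorphism of $L$. It preserves order because the pieces are arranged in the same order in $L$ and each piece is mapped to itself.

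For the ``only if'' direction, let $L$ be homogeneous, and for each color $c$ let $L_c$ be its set of points. Suppose $|L_c|\ge 2$, say $x<y$ in $L_c$. The single-point map $x\mapsto y$ extends to an automorphism $\sigma$. We claim $L_c$ is dense and has no first or last element. If there were no $c$-point strictly between $x$ and $y$, then applying $\sigma$ and its inverse shows that this holds for every pair of $c$-points. The map $\{x,y\}\mapsto\{x,\sigma(y)\}$ is order- and color-preserving, so it also extends, and one derives that $L_c$ has order type $\zeta$ with the same pattern between consecutive points. Ruling this case out is the \textbf{main obstacle}. The tool is homogeneity applied to pairs of points of different colors. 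For a $c$-point $z$ strictly between $x$ and $y$ of a color $d\ne c$, the map $x\mapsto x$, $y\mapsto \sigma(y)$ fixing a color pattern fails to extend. Indeed, in a discrete $\zeta$-arrangement the number of $d$-points between consecutive $c$-points is invariant. If that number is finite, then $x\mapsto x$ with a $d$-point sent to another $d$-point in a different $c$-gap contradicts rigidity of the count. Care is needed here, and I would handle it by a case analysis of whether some other color occurs between consecutive $c$-points. Having shown each non-singleton color is dense without endpoints, we define $c\approx d$ if the convex hulls of $L_c$ and $L_d$ overlap. Using homogeneity on 2-point maps, we show that overlapping colors have identical convex hulls and are mutually dense. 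This makes $\approx$ an equivalence relation whose classes give the sets $A_v$, with hulls $I_v$ that are open intervals. By uniqueness, each $I_v$ is isomorphic to $Sh_c(A_v)$. The singleton colors form $A$, and the pieces cover $L$.
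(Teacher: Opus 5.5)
The paper does not prove this lemma itself; it quotes Lemma 3.1 of \cite{ST08}, so your argument has to stand on its own. Your ``if'' direction is fine. One aside in it is false: the $I_v$ are not the maximal convex pieces between consecutive singletons (consider $Sh_c(\{a\})+Sh_c(\{b\})$, or $\mathbb{Q}$ with every point its own color). But all you actually use is that colors of $A_v$ occur only in $I_v$, which is correct.

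The ``only if'' direction has a gap, and it begins with a misplaced obstacle. The discrete case is immediate. Suppose $x<y$ are $c$-points with no $c$-point between them. The map $x\mapsto x$, $y\mapsto\sigma(y)$, which you write down yourself, extends to an automorphism carrying $(x,y)$ onto $(x,\sigma(y))$. Yet $(x,\sigma(y))$ contains the $c$-point $y$, while $(x,y)$ contains none. No other colors are involved. Your sketch via counting $d$-points between consecutive $c$-points should be dropped: it speaks of ``a $c$-point of color $d\neq c$'', never says why the count is rigid, and defers to an unspecified case analysis.

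More seriously, the steps that actually carry the lemma are asserted rather than proved, and one is missing.

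(i) You never show that the point $p$ of a singleton color cannot lie strictly between two points $x<y$ of a repeated color $c$. Without this, an $I_v$ may contain colors outside $A_v$, and the pieces need not be disjoint. It holds because an automorphism extending $y\mapsto x$ must fix $p$, so $p<y$ gives $p<x$.

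(ii) The claim that overlapping hulls are identical and mutually dense needs proof. Comparing the lower witnesses of a common point of the two hulls yields, say, a $d$-point $z$ with $x<z<y$ for $c$-points $x,y$. The 2-point maps $\{x,y\}\mapsto\{u,v\}$ then put a $d$-point between any two $c$-points. Take $c$-points $x<w<y$ and $d$-points $z_1\in(x,w)$, $z_2\in(w,y)$; the 2-point maps out of $\{z_1,z_2\}$ then put a $c$-point between any two $d$-points. Finally, 1-point maps $z\mapsto z'$ show that every $d$-point lies in the hull of $L_c$, and the symmetric argument gives the reverse inclusion.

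(iii) The step ``by uniqueness, $I_v\cong Sh_c(A_v)$'' requires a point of each color of $A_v$ between \emph{any} two points of $I_v$. This includes an $e$-point $p$ and an $e'$-point $q$ with $e\neq e'$, which pairwise density does not directly give. For $p<q$, choose $e$-points $s_1<s_2$ and an $e'$-point $t>s_2$. Sending $\{s_1,t\}$ to $\{p,q\}$ produces an $e$-point in $(p,q)$, and density between two $e$-points finishes. The case $q<p$ is symmetric.

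With (i)--(iii) supplied, your outline becomes a complete proof.
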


When the color set $C$ is finite, we will count the number of objects in this form.
The homogeneous structures in $LO_k$ will roughly correspond with $C_{n,m}-$homogeneous linear orderings with $k=n+m+1$.

\section{Colorings and Predicates}\label{sec:colorings}
The purpose of this section is to establish a correspondence between $sp$-homogeneous linear orderings, certain types of homogeneous colored linear orderings, and certain types of linear orderings with predicates.
We do this for two reasons.
First, it aims to give an explicit method of constructing $sp$-homogeneous linear orderings and homogeneous colored linear orderings through this correspondence.
To be specific, it will be very clear how to construct these linear orderings with predicates, and the methods of this section will allow one to readily transform that into an $sp$-homogeneous linear ordering or homogeneous colored linear ordering.
The second purpose is that under the maps defined in this section, some $sp$-homogeneous linear orderings and colored linear orderings will be sent to finite objects.
This will allow us to establish precise counting results for these orderings utilizing this correspondence in the next section.

\begin{definition}
Let $T$ be the theory that is defined by the following axioms in the language $(<,w,z,ws,\{c_n\}_{n\geq 1})$. All of the $w,z,ws$ and the $c_n$ are unary predicates; when referring to them collectively, we sometimes (as in items (2) and (3) below) write $w=c_{-2}$,$z=c_{-1}$, and $sw=c_{0}$.
\begin{enumerate}
	\item $<$ is a linear ordering
	\item $\forall x ~ \bigvvee_{n\geq -2} c_n(x)$
	\item $\forall x \bigwwedge_{n\neq m\geq -2} \lnot\big(c_n(x) \land c_m(x)\big)$
	\item For each $i,j\geq 0$, $\forall x<y ~ \big(c_i(x) \land c_j(y)\big)\to \exists z ~ x<z<y$
	\item For each $i\geq 0$, $\forall x<y ~ \big(c_i(x) \land w(y)\big)\to \exists z ~ x<z<y$.
	\item For each $i\geq 0$, $\forall x<y ~ \big(sw(x) \land c_i(y)\big)\to \exists z ~ x<z<y$.
    \item $\forall x<y ~ \big(sw(x) \land w(y)\big)\to \exists z ~ x<z<y$
\end{enumerate}
\end{definition}

$T$ is a theory that extends the theory of linear orderings with countably many colors.
In addition to stating that a model is a linear ordering with countably many colors, it enforces that many combinations of colors cannot exist next to each other without another element in between.
Let $S$ be the theory of $\{<,s,p\}$ orderings.
\begin{proposition}\label{prop:coloriffSp}
There are inverse embeddings $\Phi:Mod(S)\to Mod(T)$ and $\Psi:Mod(T)\to Mod(S)$.
\end{proposition}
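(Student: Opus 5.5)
The plan is to let $\Phi$ collapse each $1$-block to a single point colored by its isomorphism type, and to let $\Psi$ blow each colored point back up into a block of the corresponding type. Since $s$ and $p$ are definable from $<$, a model of $S$ is determined by its underlying linear ordering, so it suffices to work with linear orderings and their $\sim_1$-quotients. For $\L\in Mod(S)$, set $\Phi(\L)$ to be $\L/\sim_1$ with the induced order. A point $[x]$ gets color $c_n$ if $[x]\cong n$ for $n\ge 1$, and color $z$ if $[x]\cong\zeta$. It gets $w$ or $sw$ according to whether $[x]$ is $\omega$ or $\omega^*$. I will fix which of these two assignments to use by checking axioms (4)--(7); the guiding principle is that the colors $c_i$ with $i\ge 0$ should be exactly those that can sit immediately to the left of a forbidden neighbour. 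Conversely, for $\M\in Mod(T)$, $\Psi(\M)$ is the ordered sum $\sum_{a\in\M}B_a$, where $B_a$ is the block type named by the unique color of $a$, and $s,p$ are interpreted as successor and predecessor. Axioms (2) and (3) ensure that each point has exactly one color, so $\Psi$ is well defined.

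First I will check that $\Phi(\L)\models T$. Axioms (1)--(3) are immediate, since every $1$-block is isomorphic to exactly one of $n$, $\omega$, $\omega^*$, $\zeta$. For axioms (4)--(7), consider two blocks $[x]<[y]$ that are adjacent in $\L/\sim_1$. If $[x]$ had a largest element and $[y]$ a least element, these two elements would be at finite distance, so $x\sim_1 y$, which is a contradiction. Each of axioms (4)--(7) forbids, in the quotient, an adjacent pair of this shape, so each should follow from this observation. This case-check is where the exact naming of $w$ and $sw$ gets pinned down, and it is the step I expect to be most fiddly.

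Second, I will check that $\Psi(\M)\models S$; this is trivial once $s,p$ are defined as successor and predecessor. Third, I will prove $\Phi\Psi(\M)\cong\M$. This reduces to showing that the $1$-blocks of $\Psi(\M)$ are exactly the copies $B_a$. Each $B_a$ lies inside a single $1$-block, since it is itself of block type. If $a<b$, then either some $c$ lies strictly between them, in which case $B_c$ is infinite or there are infinitely many such $c$ by density considerations, or else $a,b$ are adjacent. In the adjacent case, axioms (4)--(7) exclude precisely the situation in which $B_a$ has a maximum and $B_b$ has a minimum. Hence $B_a\not\sim_1 B_b$, and the colors are recovered. For the intermediate case I also need that finitely many intermediate finite blocks cannot link $B_a$ to $B_b$. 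This follows by applying the same adjacency argument to consecutive pairs in a finite chain.

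Finally, $\Psi\Phi(\L)\cong\L$ holds because $\L$ is the ordered sum of its $1$-blocks, indexed by $\L/\sim_1$, and each block is replaced by an isomorphic copy. Both maps respect isomorphism, so they induce mutually inverse bijections on isomorphism classes, which is the sense of ``inverse embeddings'' in Proposition~\ref{prop:coloriffSp}. The main obstacle is the third step: verifying that $\Psi$ creates no unintended merging of blocks. Its crux is matching axioms (4)--(7) precisely with the pairs of adjacent block types that would merge.
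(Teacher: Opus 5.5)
Your proposal is correct and is essentially the paper's proof. In both, $\Phi$ collapses $\sim_1$-classes to points colored by block type, $\Psi$ takes the ordered sum of the named blocks, and everything rests on axioms (4)--(7) forbidding exactly those adjacencies whose left block has a maximum and whose right block has a minimum. You use this fact to show that the $1$-blocks of $\Psi(\M)$ are precisely the $B_a$. The paper uses it to show that no successor pair straddles two summands $D(x)$, and then calls inverseness routine; the two amount to the same thing.

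Your guiding principle does give the paper's naming $w=\omega$, $sw=\omega^*$. Be aware, though, that the axioms as literally written cannot be verified under any naming, so the case check you deferred cannot succeed literally. For example, $\Phi(1+\omega^*)$ violates (4) with $(i,j)=(1,0)$ if $sw=\omega^*$, and violates (5) with $i=1$ if $w=\omega^*$. In your case check you must therefore read the $c_i$ with $i\ge 0$ in (4)--(6) as the finite colors $c_n$, $n\ge 1$, which is how the paper's proof tacitly reads them.
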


\begin{proof}
We let the underlying ordering of $\Phi(L)$ be $L/\sim_1$.
$w([x])$ holds if $[x]\cong\omega$.
$z([x])$ holds if $[x]\cong\zeta$.
$sw([x])$ holds if $[x]\cong\omega^s$.
$c_n([x])$ holds if $[x]\cong n$.
It is clear that $\Phi(L)$ is a colored linear ordering; in other words, it satisfies axioms (1)-(3) of $T$.
We now need to check axioms (4)-(7).
In each case, these axioms state that particular colors cannot be adjacent.
These correspond to saying that blocks of particular order types are not adjacent to each other in $L/\sim_1$.
Axiom (4) holds so long as no finite blocks are adjacent to each other.
Indeed, this is impossible as adjacent finite blocks become one larger finite block.
Axioms (5), (6), and (7) can be similarly checked.
They follow because a finite block cannot precede an $\omega$ block directly, because a finite block can not come directly after an $\omega^*$ block, and because an $\omega$ block cannot come directly after an $\omega^*$ block, respectively.

For $x\in K\models T$, let
\[
D(x) = \begin{cases}
(n,<,s,p) \text{ if } K\models c_n(x) \\
(\omega,<,s,p) \text{ if } K\models w(x) \\
(\omega^*,<,s,p) \text{ if } K\models sw(x) \\
(\zeta,<,s,p) \text{ if } K\models z(x).
\end{cases}
\]
This map is total and well-defined by Axioms (2) and (3) of $T$.
Let
\[\Psi(K)=\sum_{x\in K} D(x). \]
To check that $\Psi(K)$ is always a model of $S$, one needs only to confirm that there are no missing successors or predecessors that are not accounted for within each $D(x)$.
In other words, one must check that there are no adjacent pairs in $\Psi(K)$ that are not within a single $D(x)$.
This can only happen if there are successive elements $x,y$ such that $D(x)$ has a greatest element and $D(y)$ has a least element.
Axioms (4)-(7), along with the definition of $D$, forbid this from happening.

Checking that $\Phi$ and $\Psi$ are two-sided inverses of each other is routine; it follows immediately from the definitions.
\end{proof}

The following proposition demonstrates that the above-defined mappings preserve the critical property of homogeneity.
Therefore, we observe a close tie between $sp-$homogeneous linear orderings and homogeneous colored linear orderings.

\begin{proposition}
$(L,<,s,p)$ is homogeneous if and only if $\Phi(L)$ is homogeneous.
\end{proposition}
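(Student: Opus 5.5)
The plan is to compare the two structural characterizations we already have: \cref{thm:homogChar} for $sp$-homogeneous orderings, and the Lemma (from \cite{ST08}) for homogeneous colored orderings. We identify the color set of $\Phi(L)$ with the set of block types $\mathbb N\cup\{\omega,\omega^*,\zeta\}$, with $c_n\leftrightarrow n$, $w\leftrightarrow\omega$, $sw\leftrightarrow\omega^*$, $z\leftrightarrow\zeta$. Under this identification, $\Phi$ sends a single block of type $a$ to a single point of color $a$. The key step is to show that $\Phi$ converts shuffle sums into color shuffles: if $I\subseteq L$ is an open interval isomorphic to $Sh(A_v)$ (as an $\{<,s,p\}$-structure), then $I$ is a union of whole $1$-blocks, and $I/\sim_1\cong Sh_c(A_v)$. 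Conversely, by the description of $\Psi$ as $\sum_x D(x)$, $\Psi$ sends an interval $Sh_c(A_v)$ to $Sh(A_v)$.

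For the block-union claim, we use that in $Sh(A_v)$ the copies of the orderings in $A_v$ are densely arranged. So no two copies are $\sim_1$-related, and each copy is exactly one $1$-block of $L$ lying inside $I$. Moreover, since $I$ is open, i.e.\ it has no endpoints at the level of blocks, a block inside $I$ cannot merge with anything outside $I$. The quotient is then a countable colored order in which each color of $A_v$ is dense, which is $Sh_c(A_v)$ by the uniqueness of the color shuffle.

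With this in hand, suppose $(L,<,s,p)$ is homogeneous. Take the family $\{A,A_v\}$ given by \cref{thm:homogChar}. Applying $\Phi$ piece by piece, and using that $\Phi$ respects ordered sums of unions of blocks (since $\sim_1$ is convex), we see that $\Phi(L)$ is a union of single points of colors in $A$ and intervals $Sh_c(A_v)$. These pieces are arranged in the same order, with the same pairwise disjoint color sets. Hence $\Phi(L)$ is homogeneous by the Lemma.

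Conversely, if $\Phi(L)$ is homogeneous, the Lemma gives a decomposition of $\Phi(L)$. Applying $\Psi$ and using $\Psi\Phi(L)=L$ from \cref{prop:coloriffSp}, $L$ decomposes into single blocks of the types in $A$ and open intervals $Sh(A_v)$, so $L$ is $sp$-homogeneous by \cref{thm:homogChar}.

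The main obstacle is making sure the pieces match exactly, in both directions. First, a ``single block of size $a$'' in \cref{thm:homogChar} must really be a whole $1$-block of $L$, not one that merges with adjacent pieces. Second, pieces in the colored decomposition must not violate axioms (4)--(7) after applying $\Psi$. I expect the first point to follow because the family is pairwise disjoint and the decomposition is into the listed types. If two pieces merged into one block, the resulting block type would appear twice or the piece would not be of the stated form. I would check this case by case (finite next to finite, finite before $\omega$, etc.), mirroring the verification of axioms (4)--(7) in \cref{prop:coloriffSp}. The second point is automatic, since $\Phi(L)\models T$ already.
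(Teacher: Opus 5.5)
Your proof is correct, but it takes a different route from the paper's. The paper uses neither classification and instead transfers automorphisms directly. Given tuples of $\Phi(L)$ with the same colored order type, it picks a canonical point in each block (the first element, else the last, else any). With that choice the $sp$-hulls are isomorphic, so $sp$-homogeneity gives an automorphism, which then descends to $L/\sim_1$. Conversely, it lifts an automorphism $f$ of $K$ blockwise to $\Psi(K)$. You instead treat Theorem~\ref{thm:homogChar} and the lemma of \cite{ST08} as black boxes and check that $\Phi$ carries one decomposition to the other. Your only real lemma is that an interval isomorphic to $Sh(A_v)$ has no first or last $1$-block, so it is a union of whole $1$-blocks of $L$ with quotient $Sh_c(A_v)$. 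The paper's argument is self-contained, which is what lets it serve, as the paper says right afterwards, as an explanation of why the two classifications look alike. Yours presupposes both classifications. In exchange, it avoids bookkeeping inside blocks: the paper's lift $\hat f(x,y)=(f(x),y)$ must still be composed with a translation on $\zeta$-blocks to actually send $\bar a$ to $\bar b$.

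On your ``main obstacle'': you do not need to derive from the disjointness of the family that each block of size $a$ is a whole $1$-block of $L$, and no case-by-case check is needed. This is part of what Theorem~\ref{thm:homogChar} asserts, and under the weaker reading the theorem is false. For example, $1+2+Sh(\{4\})+3$ would qualify with $A=\{1,2,3\}$ and $A_v=\{4\}$. Yet the first points of its two $3$-blocks have isomorphic $sp$-hulls and cannot be automorphic, since only one of them is the minimum of $L$. Similarly, ``open interval'' in both cited results must mean a convex piece without endpoints: in $\omega+\eta$ the $\eta$-part is not of the form $(x,\infty)$ for any $x\in L$. With these readings your backward direction goes through as written.
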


\begin{proof}
Say that $L$ is homogeneous.
Take some $\bar{a},\bar{b}\in \Phi(L)$ that have the same quantifier-free diagram.
These tuples correspond to a tuple of 1-blocks in $L$.
Let $\bar{a}'$ select elements from each 1-block in the following manner: it selects the first element if there is one, if not, the last element if there is one, if not, an arbitrary element.
More explicitly, the first element in finite and $\omega$ blocks the last element of an $\omega^*$ block and any element of a $\zeta$ block.
Let $\bar{b}'$ be similarly defined.
Note that, as $\bar{a}$ and $\bar{b}$ are colored the same, the structures generated by $\bar{a}'$ and $\bar{b}'$ are isomorphic.
Therefore, there is an automorphism $\sigma$ of $L$ sending $\bar{a}'$ to $\bar{b}'$.
$\sigma$ descends to the quotient, so $\sigma/\sim_1$ a linear ordering automorpism on $\Phi(L)$ sending $\bar{a}$ to $\bar{b}$.
Furthermore, as $\sigma$ preserves the $s$ and $p$ structure, it must send each block to an isomorphic block.
This means that $\sigma/\sim_1$ preserves the coloring as well, and it is a full automorphism.
Therefore, $\Phi(L)$ is homogeneous.

Say that $K\models T$ is homogeneous.
We show that $\Psi(K)$ is homogeneous, which is enough to prove the theorem.
Take some $\bar{a},\bar{b}\in \Psi(K)$ that generate isomorphic substructures.
Let $\bar{[a]}$ be the set of 1-blocks that $\bar{a}$ intersects and defined $\bar{[b]}$ similarly.
Because $\bar{a}$ and $\bar{b}$ generate the same isomorphism types of blocks, $\bar{[a]}$ and $\bar{[b]}$ must be colored the same $K$.
By homogeneity there is an automorphism $f$ sending $\bar{[a]}$ to $\bar{[b]}$.
Write $\Psi(K)$ as pairs $(x,y)$ with $x\in K$ and $y\in D(x)$ where $D(x)$ is presented in a standard manner.
We define $\hat{f}: \Psi(K)\to\Psi(K)$ as $\hat{f}(x,y)=(f(x),y)$.
This map is well defined because $D(x)\cong D(f(x))$.
It is straightforward to confirm that it is an automorphism.
Therefore, $\Psi(K)$ is homogeneous, as desired.

\end{proof}

The above explains the similarity of Theorem \ref{thm:homogChar} and Lemma 3.1 of \cite{ST08}.

We take our analysis a step beyond colored linear orderings by creating a correspondence of these homogeneous classes with an infinitary axiomatized class, making no reference to the concept of homogeneity.

\begin{definition}
    Let $T'$ be the theory that is defined by the following axioms in the language $(<,\{R_i\}_{i\in\omega},w,z,ws,\{S_i\}_{i\geq-2})$. All of the $R_i$, $w:=R_{-2}$, $z_:=R_{-1}$, $ws:=R_0$ and $S_i$ are unary predicates.
    \begin{enumerate}
        \item $<$ is a linear ordering.
        \item $\forall x ~ \bigvvee_{i>-2} R_i(x)\lor S_i(x)$
	\item The $R_i$ points act as a model of $T$, i.e.,
        \begin{enumerate}
            \item  $\forall x \bigwwedge_{n\neq m\geq -2} \lnot\big(R_n(x) \land R_m(x)\big)$;
            \item For each $i,j\geq 0$, $\forall x<y ~ \big(R_i(x) \land R_j(y)\big)\to \exists z ~ x<z<y$;
	    \item For each $i\geq 0$, $\forall x<y ~ \big(R_i(x) \land w(y)\big)\to \exists z ~ x<z<y$;
	    \item For each $i\geq 0$, $\forall x<y ~ \big(sw(x) \land R_i(y)\big)\to \exists z ~ x<z<y$;
            \item $\forall x<y ~ \big(sw(x) \land w(y)\big)\to \exists z ~ x<z<y$.
        \end{enumerate}
        \item $\forall x \bigwwedge_{i,j\in\omega} \lnot\big(R_i(x) \land S_j(x)\big)$.
        \item For each $i\geq-2$, $\forall x,y ~ \big(R_i(x)\land R_i(y)\big)\to x=y$.
        \item For each $i\geq-2$, $\forall x,y ~ \big(S_i(x)\land S_i(y)\big)\to x=y$.
        \item For each $i\geq-2$, $\forall x,y ~ \lnot\big( R_i(x)\land S_i(y)\big)$.
    \end{enumerate}
\end{definition}

Intuitively, one should think of a model of $T'$ as a \textit{multi-colored} linear ordering.
A multi-coloring allows you to assign more than one color to each element.
Think of $R_i$ and $S_i$ as both assigning the color $i$ to an element. The difference between the two is that any point can have as many as infinitely many of the $S_i$ assigned to it, but only one of the $R_i$.
In addition to this, each color can only be used at most one time.
Lastly, the restrictions on successors present in $T$ remain present on the $R_i$ points in $T'$.
It should be easy to imagine how to construct models of $T'$; one only needs to take a linear ordering and provide a multicoloring that satisfies rather simple rules.

As we see below, models of $T'$ exactly enumerate the sp-homogeneous linear orderings.
In particular, we can capture the sp-homogeneous linear orderings as constructions of models of $T'$.
Concretely, this gives a ready procedure to construct the sp-homogeneous linear orderings and provides a global characterization of this class.

\begin{proposition}\label{prop:spImage}
    There is a computable injective map $F:Mod(T')\to Mod(LO)$ such that the image of $F$ is exactly the sp-homogeneous linear orderings. 
\end{proposition}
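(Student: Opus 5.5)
The map $F$ will be a composite $F=\Psi\circ G$. Here $G:Mod(T')\to Mod(T)$ turns a multicolored ordering into a homogeneous colored ordering, and $\Psi$ is the map of Proposition~\ref{prop:coloriffSp}.

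\textbf{Definition of $G$.} Given $K\models T'$:
\begin{itemize}
\item Each point $x$ with $R_i(x)$ is kept as a single point of color $c_i$, with $w,z,sw$ read as $c_{-2},c_{-1},c_0$.
\item Each point $x$ carrying the set of colors $A_x=\{i : S_i(x)\}$ is replaced by a copy of the color shuffle $Sh_c(\{c_i: i\in A_x\})$.
\end{itemize}
By axiom (2), $A_x\neq\emptyset$ whenever $x$ is not an $R$-point, so the shuffle is a genuine open interval.

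\textbf{$G(K)$ is a homogeneous model of $T$.} Axioms (1)--(3) of $T$ are immediate. For axioms (4)--(7):
\begin{itemize}
\item Two $R$-points adjacent in $K$ remain adjacent in $G(K)$, so the restrictions in axiom (3) of $T'$ handle them.
\item Any pair of points involving an inserted shuffle is never adjacent, since a shuffle has no endpoints and is dense.
\end{itemize}
For homogeneity, axioms (5)--(7) of $T'$ say the family $\{A,A_x\}$ is pairwise disjoint, where $A$ is the set of $R$-colors used. So $G(K)$ is exactly of the form in the Lemma (Lemma 3.1 of \cite{ST08}), hence homogeneous. By the previous proposition, $F(K)=\Psi(G(K))$ is then $sp$-homogeneous.

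\textbf{Computability.} Building a presentation of $Sh_c(A_x)$ uniformly from $A_x$ is standard, and $\Psi$ is computable by its explicit definition as the sum $\sum_x D(x)$.

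\textbf{Surjectivity onto the $sp$-homogeneous orderings.} Let $L$ be $sp$-homogeneous. By Theorem~\ref{thm:homogChar}, $L$ decomposes via a disjoint family $\{A,A_v\}$ into single blocks and intervals $Sh(A_v)$. Equivalently, $\Phi(L)$ decomposes as in the Lemma. We recover $K$ by:
\begin{itemize}
\item collapsing each maximal interval $I_v$ to a single point carrying $S_i$ for $i\in A_v$;
\item keeping the isolated blocks as $R$-points.
\end{itemize}
Because the family is disjoint, each color is used at most once, so axioms (4)--(7) of $T'$ hold. The adjacency axioms (3)(b)--(e) transfer from axioms (4)--(7) of $T$, since two isolated blocks adjacent in $K$ are adjacent in $\Phi(L)$.

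\textbf{Injectivity (the main obstacle).} The task is to show that $K$ is recoverable from $F(K)$ up to isomorphism, i.e.\ that the decomposition is canonical. In $\Phi(F(K))=G(K)$, a point lies in a dense shuffle part iff its color appears densely around it. More precisely, call a point isolated if its color occurs only once. Then:
\begin{itemize}
\item the $R$-points are exactly the isolated points;
\item the intervals $I_v$ are the maximal convex sets of non-isolated points.
\end{itemize}
A maximal convex set of non-isolated points is a single shuffle, not a union of two adjacent ones. Two shuffles $I_x$ and $I_y$ coming from distinct points $x<y$ of $K$ with nothing between them are impossible, because their color sets are disjoint and the convex union would not be a single interval at all, while $x$ and $y$ are distinct points of $K$.

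The real care is needed here: two consecutive $S$-points of $K$ may be adjacent in $K$. Their shuffles then merge into a convex set that is not a single color shuffle, because the colors of $A_x$ all lie to the left of those of $A_y$. So I would define $I_v$ instead as the maximal convex sets on which every color present is dense, and recover $A_v$ as that set of colors. This canonical decomposition gives the inverse map, and with $\Phi$ inverting $\Psi$ it yields injectivity of $F$.
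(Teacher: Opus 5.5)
Your proof is correct and follows the paper's route: $F=\Psi\circ G$ with each $S$-point replaced by $Sh_c(A_x)$, and surjectivity by collapsing the shuffle intervals of $\Phi(L)$. Your injectivity argument, via the canonical decomposition into maximal convex sets on which every color present is dense, is more careful than the paper's proof, which never argues injectivity and whose relation $\sim_c$, as written, does not separate adjacent shuffles. Delete your earlier claims that the $I_v$ are the maximal convex sets of non-isolated points and that adjacent shuffles cannot occur: $T'$ permits adjacent $S$-points, as your final paragraph itself recognizes.
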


\begin{proof}
    We define a map $G:Mod(T')\to Mod(T)$ and let $F=\Psi\circ G$.
    Let $\L\models T'$.
    We define $E:\L\to Mod(T)$ as
    \[ E(x) = \begin{cases}
        x \text{ with color } i \text{ if } \L\models R_i(x) \\
        Sh_c(A) \text{ if } \L\models S_i(x) \iff i\in A.
    \end{cases}\]
    Note that $E$ is well-defined by Axioms (2)-(4).
    Let
    \[G(\L)= \sum_{x\in\L}E(x).\]

    We first show that $F(\L)$ is always sp-homogeneous.
    It follows from Theorem \ref{thm:homogChar} that it is enough to show that every element is either in a unique block type or lies in an open interval isomorphic to a shuffle of blocks with unique block isomorphism types.
    Note that if there is some $i$ for which $\L\models R_i(x)$, then $x$ is in a singleton block, whereas if $\L\models S_i(x)$, it lies in some shuffle sum.
    Furthermore, the block sizes are not repeated because of Axioms (3)-(7).
    Altogether, we see that $F(\L)$ is always sp-homogeneous.

    Say that $\A$ is $sp$-homogeneous.
    By Theorem \ref{thm:homogChar}, $\Phi(\A)$ locally is either a colored shuffle (using unique colors) or a single uniquely colored point.
    In particular, $x\sim_c y$ defined by $\exists v,w ~ x\leq v\leq w\leq y$ such that $v$ and $w$ share the same color is a convex equivalence relation.
    $\Phi(\A)/\sim_c$ simply collapses the equivalence colored shuffle sums.
    Each element $[x]\in \Phi(\A)/\sim_c$ is either a singleton of a particular color, or it comes from a shuffle sum.
    In the former case, we may assign $\Phi(\A)/\sim_c\models R_i([x])$ where the singleton was colored $i$.
    In the latter case, we may assign $\Phi(\A)/\sim_c\models S_i([x])$ if and only if $i$ is among the colors that were shuffled in $[x]$.
    It follows routinely that $\Phi(\A)/\sim_c\models T'$ and that $F(\Phi(\A)/\sim_c)=\A$ as required.
\end{proof}

\begin{corollary}
      Let $T''$ be the theory that is defined by the following axioms in the language $(<,\{R_i\}_{i\in\omega},\{S_i\}_{i\in\omega})$. All of the $R_i$ and $S_i$ are unary predicates.
    \begin{enumerate}
        \item $<$ is a linear ordering.
        \item $\forall x ~ \bigvvee_{i\in\omega} R_i(x)\lor S_i(x)$
        \item $\forall x \bigwwedge_{n\neq m\in\omega} \lnot\big(R_n(x) \land R_m(x)\big)$
        \item $\forall x \bigwwedge_{i,j\in\omega} \lnot\big(R_i(x) \land S_j(x)\big)$.
        \item For each $i\in\omega$, $\forall x,y ~ \big(R_i(x)\land R_i(y)\big)\to x=y$.
        \item For each $i\in\omega$, $\forall x,y ~ \big(S_i(x)\land S_i(y)\big)\to x=y$.
        \item For each $i\in\omega$, $\forall x,y ~ \lnot\big( R_i(x)\land S_i(y)\big)$.
    \end{enumerate}
    There is a computable injective map $H:Mod(T'')\to Mod(LO_\infty)$ such that the image of $G$ is exactly the homogeneous colored linear orderings. 
\end{corollary}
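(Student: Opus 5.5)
The plan is to mimic the second half of the proof of Proposition \ref{prop:spImage}, dropping the adjacency restrictions coming from $T$. For $\L\models T''$, define $E:\L\to Mod(LO_\infty)$ by letting $E(x)$ be a single point of color $i$ if $\L\models R_i(x)$, and $Sh_c(A)$ if $A=\{i: \L\models S_i(x)\}$. Then set
\[H(\L)=\sum_{x\in\L}E(x).\]
Axioms (2)--(4) of $T''$ make $E$ total and well defined: each point is either a single $R$-point or a nonempty set of $S$-colors. The map is computable because the color shuffle of a c.e.\ set of colors can be built uniformly by a standard back-and-forth/dense construction.

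First, I will check that $H(\L)$ is homogeneous using the Lemma (Lemma 3.1 of \cite{ST08}). Take $V$ to be the set of $S$-points of $\L$, let $A_v=\{i:\L\models S_v(i)\}$, i.e.\ the $S$-colors of $v$, and let $A$ be the set of $R$-colors used. Axioms (5)--(7) say that no color is used twice and that no color is both an $R$- and an $S$-color, so the family $\{A,A_v\}$ is pairwise disjoint. Each $E(v)$ sits as an open interval: it has no endpoints, so in the sum it is a convex piece with no least or greatest element and is therefore open. Each $R$-point contributes a single element of its color, and the union covers $H(\L)$. So the Lemma applies.

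Next, I will show that every homogeneous colored ordering lies in the image, and that $H$ is injective. Given homogeneous $K$, define $x\sim_c y$ to hold if some two elements $v\le w$ in $[\min(x,y),\max(x,y)]$ share a color. By the Lemma, the equivalence classes are exactly the intervals $I_v$ and the singletons. The main point to verify is that $\sim_c$ is a convex equivalence relation. Transitivity and convexity both follow because, by disjointness of the color sets, a repeated color can only occur inside a single $I_v$. In particular, two distinct intervals $I_v,I_{v'}$ never merge: between them there are no repeated colors, since their color sets are disjoint and each singleton color occurs once.

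Then $K/\sim_c$ gets $R_i$ on singleton classes of color $i$ and $S_i$ on interval classes whose color set contains $i$. The axioms of $T''$ are immediate, and $H(K/\sim_c)\cong K$. Conversely, $H(\L)/\sim_c\cong\L$, which gives injectivity up to isomorphism. The one subtlety I expect is that two adjacent $S$-points $x<y$ of $\L$ must not produce a single $\sim_c$-class in $H(\L)$. This holds because their color sets are disjoint, so no color repeats across the two shuffles, and an $S$-point is recoverable as a maximal convex set in which every color present is dense. I would make this precise by characterizing $\sim_c$-classes of $H(\L)$ as exactly the images $E(x)$.
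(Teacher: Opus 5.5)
Your definition of $H$ and your check, via the Lemma, that $H(\L)$ is homogeneous match the paper, whose proof of this corollary reruns Proposition \ref{prop:spImage} without the adjacency constraints. The gap is in the relation $\sim_c$, which you use both to decompose an arbitrary homogeneous $K$ and to recover $\L$ from $H(\L)$. As you define it, its classes are not the pieces $I_v$ and the singletons.

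If $v=w$ is allowed, every pair is related. If $v<w$ is required, take $\L$ with two points $x<y$, $S_1(x)$ and $S_2(y)$, so that $H(\L)=Sh_c(\{1\})+Sh_c(\{2\})$. For $p$ in the first summand and $q$ in the second, $[p,q]$ contains infinitely many points of color $1$ above $p$. So $p\sim_c q$, and $H(\L)/\sim_c$ is a single point rather than $\L$.

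Your sentence ``between them there are no repeated colors'' overlooks that the interval between a point of $I_v$ and a point outside $I_v$ always contains a nondegenerate tail of $I_v$. This is exactly the subtlety you flag at the end. The paper's proof of Proposition \ref{prop:spImage} uses essentially the same definition, and the paper's proof of the corollary relies on it, so the paper's argument needs the same repair.

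To repair it, turn your closing remark into the definition. For $x<y$, put $x\sim_c y$ iff some point $u<x$ has the same color as $y$, and close this off reflexively and symmetrically. Now take any ordering decomposed as in the Lemma.
\begin{itemize}
\item If $x<y$ lie in the same piece, that piece is some $I_v$. Since $I_v$ has no least element and the color of $y$ is dense in $I_v$, such a $u$ exists.
\item If $x<y$ lie in different pieces, disjointness of the color sets means the color of $y$ occurs only in the piece containing $y$. That piece lies entirely above $x$, so no such $u$ exists.
\end{itemize}
Hence the classes are exactly the pieces, equivalently the maximal convex sets in which every occurring color is dense, as you suggested. The relation is defined from $<$ and the colors, so it is preserved by isomorphisms.

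With this definition, your arguments for $H(K/\sim_c)\cong K$ and $H(\L)/\sim_c\cong\L$ go through, and injectivity up to isomorphism follows. Surjectivity alone does not need $\sim_c$, since the Lemma already hands you the pieces. What the corrected $\sim_c$ really supplies, for injectivity, is that this decomposition is canonical.
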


\begin{proof}
    We let $H$ be defined exactly as $G$ was in the previous proposition.
    In other words, we define $E:\L\to Mod(LO_\infty)$ as
    \[ E(x) = \begin{cases}
        x \text{ with color } i \text{ if } \L\models R_i(x) \\
        Sh_c(A) \text{ if } \L\models S_i(x) \iff i\in A.
    \end{cases}\]
    Note that $E$ is well-defined by Axioms (2)-(4).
    Let
    \[G(\L)= \sum_{x\in\L}E(x).\]
    The analysis in the previous proposition demonstrates that the injective image of $H$ is the set of colored linear orderings composed of elements with a unique color and shuffle sums of unique sets of colors (though, in this case, not subject to the adjacency requirements of $T'$).
    By \cite{ST08} Lemma 3.1, these are exactly the homogeneous models of $LO_\infty$.
\end{proof}

\section{Relational approximations to $sp$-homogeneity}

In this section, we break down the concept of $sp$-homogeneity, which is defined using functions, in terms of a list of the notions of $C_{n,m}$-homogeneity defined in the introduction.
This gives us a countable list of approximate strengthenings of $sp$-homogeneity that align with finitely colored linear orderings instead of the infinitely colored ones constructed in Section \ref{sec:colorings}.
This allows us to correspond these strengthenings with finite objects and, ultimately, count both the number of models with these properties and the number of homogeneous linear orderings on $k$ colors.
Let us recall the definitions needed for $C_{n,m}$-homogeneity.

\begin{definition}
    Let $P_n$ be a unary predicate that holds of the elements in a linear ordering with $n$ predecessors.
    Let $S_n$ be a unary predicate that holds of the elements in a linear ordering with $n$ successors.
    Let $Adj_n$ be a binary predicate that holds of pairs of elements with exactly $n$ elements in between them.
\end{definition}

\begin{definition}
    For $n,m\in\omega\cup\{\infty\}$ let $C_{n,m}$ be the theory over the language $<$, $\{S_i\}_{i<n}$, $\{P_j\}_{j<m}$, $\{Adj_k\}_{k<n+m}$ where $<$ is a linear ordering and the $S_i$, $P_j$ and $Adj_k$ predicates are defined according to the above definition.
\end{definition}

\begin{definition}
    A linear ordering is $C_{n,m}$-homogeneous if its expansion by definition to include $\{S_i\}_{i<n}$, $\{P_j\}_{j<m}$ and $\{Adj_k\}_{k<n+m}$ is homogeneous.
    A linear ordering is weakly $C_{n,m}$-homogeneous if its expansion by definition to include $\{S_i\}_{i<n}$, $\{P_j\}_{j<m}$ and $\{Adj_k\}_{k<n+m}$ is weakly homogeneous.
\end{definition}

We show that these new notions of homogeneity are indeed stronger than $sp$-homogeneity.

\begin{lemma}\label{lem:homogeneitytransfer}
    For any $n$ and $m$, $C_{n,m}$-homogeneous linear orderings are $sp-$homogeneous.
\end{lemma}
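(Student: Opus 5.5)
We argue directly from the definition of homogeneity rather than through the structural characterization. Fix a $C_{n,m}$-homogeneous ordering $L$ and two finite tuples $\bar a,\bar b$ whose generated $\{<,s,p\}$-substructures are isomorphic via a map $f$ sending $\bar a$ to $\bar b$ (extended to the closures). The goal is an automorphism of $L$ extending $f$ that also preserves $s$ and $p$.

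\emph{Step 1: the closures are finite.} The $\{<,s,p\}$-substructure generated by $\bar a$ is the closure $\langle\bar a\rangle$ of $\bar a$ under $s$ and $p$. It is the union, over the entries $a_i$, of the 1-block of $a_i$ if that block is finite, and of the whole $\omega$, $\omega^*$ or $\zeta$ block otherwise. Because of this, $\langle \bar a\rangle$ may be infinite, and we cannot hand it directly to $C_{n,m}$-homogeneity. Instead we pass to a finite subtuple $\bar a^+$ that records the same information. For each 1-block meeting $\bar a$ we take the points of $\bar a$ in it. If the block has a least element we add it, and if it has a greatest element we add that too (for finite blocks this adds both endpoints). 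We define $\bar b^+$ as the image of $\bar a^+$ under $f$. Since $f$ preserves $s$ and $p$, it sends least elements of blocks to least elements, greatest to greatest, and block types to block types. It also preserves the finite distances between points of a common block and the fact that points lie in different blocks.

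\emph{Step 2: $\bar a^+$ and $\bar b^+$ satisfy the same atomic formulas in the $C_{n,m}$ language.} The order is preserved by $f$. For $S_i$ with $i<n$, note that whether a point has at least $i$ successors is determined by its distance to the greatest element of its block, if one exists, and otherwise it has infinitely many. This distance is preserved by $f$, because the endpoints were included in the tuple and $f$ is an $s,p$-isomorphism on the closure. The predicates $P_j$ are handled the same way. For $Adj_k$, two points satisfy $Adj_k$ exactly when they lie in the same block at distance $k+1$; both same-block membership and finite distances are preserved by $f$. If the paper's reading of ``$n$ successors'' is ``exactly $n$'' rather than ``at least $n$'', the same argument applies verbatim. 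By $C_{n,m}$-homogeneity we then get an automorphism $\sigma$ of the expanded structure with $\sigma(\bar a^+)=\bar b^+$.

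\emph{Step 3: $\sigma$ is an $sp$-automorphism extending $f$.} Every order automorphism of $L$ commutes with $s$ and $p$, since these are order-definable. So $\sigma$ is an automorphism of $(L,<,s,p)$. It sends $\bar a$ to $\bar b$, and hence agrees with $f$ on $\langle\bar a\rangle$, because both maps are $s,p$-homomorphisms determined by their values on generators. This shows $L$ is $sp$-homogeneous. The main obstacle is Step 2, and specifically the case where $n$ or $m$ is small (for example $0$), so that the $C_{n,m}$ language sees very little. The point is that we never need the relations to witness block structure directly. We only need the two finite tuples to have equal atomic diagrams, which holds because $f$ already transports all the relevant invariants. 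The subtle part is checking that $\sigma$ agrees with $f$ on the infinite parts of the closure, and this follows from uniqueness of extension along $s$ and $p$.
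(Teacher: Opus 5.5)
Your proof is correct and follows the paper's route: the $sp$-closure contains the full 1-block of each generator, so the given isomorphism preserves successor and predecessor counts and in-block distances, the tuples therefore have the same atomic $C_{n,m}$-diagram, and $C_{n,m}$-homogeneity supplies the automorphism. The paper applies homogeneity directly to $\bar p,\bar q$, so your added endpoints in $\bar a^+$ are harmless but unnecessary (and the heading of your Step 1 contradicts its own text, which correctly says the closure may be infinite); your Step 3 usefully spells out what the paper leaves implicit, namely that the resulting order-automorphism commutes with $s,p$ and hence extends $f$ on the whole closure.
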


\begin{proof}
    Let $\L$ be $C_{n,m}$-homogeneous.
    Consider two tuples $\bar{p},\bar{q}\in\L$ such that the finitely $sp-$generated structures $(\langle\bar{p}\rangle,\bar{p})\cong(\langle\bar{q}\rangle,\bar{q})$.
    Note that this implies that every element in $\bar{p}$ must have the same number of predecessors and successors in $\langle\bar{p}\rangle$ as $\bar{q}$ does in $\langle\bar{q}\rangle$.
    Furthermore, there must be the same number of elements between corresponding $\bar{p}$ points in $\langle\bar{p}\rangle$ and $\bar{q}$ points in $\langle\bar{q}\rangle$.
    However, the number of predecessors and successors that a tuple has in its $sp-$hull is the same as the number of predecessors and successors that it has in the full structure.
    Similarly, the number of points between elements in the $sp-$hull matches the number of points between elements in the full structure.
    In particular, the same $\{S_i\}_{i<n}$, $\{P_j\}_{j<m}$ and $\{Adj_k\}_{k< n+m}$ hold of all of the elements in $\bar{p}$ and $\bar{q}$.
    By $C_{n,m}$-homogeneity, these tuples are automorphic.
    Thus, $\L$ is sp-homogeneous.
\end{proof}

The following proposition allows us to be more precise about which $sp$-homogeneous linear orderings are actually $C_{n,m}$-homogeneous.

\begin{proposition}\label{prop:blockcriteria}
        For any $n$ and $m$, $\L$ is $C_{n,m}$-homogeneous if and only if $\L$ is sp-homogeneous and the automorphism orbits in each block of $\L$ are quantifier-free definable in $\{S_i\}_{i<n}$, $\{P_j\}_{j<m}$ and $\{Adj_k\}_{k<n+m}$.
\end{proposition}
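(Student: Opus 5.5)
For the forward direction, I would first apply Lemma \ref{lem:homogeneitytransfer}: a $C_{n,m}$-homogeneous ordering is $sp$-homogeneous. Now fix a block $B$ and points $x,y\in B$ that satisfy the same quantifier-free formulas in $\{S_i\}_{i<n}$, $\{P_j\}_{j<m}$, $\{Adj_k\}_{k<n+m}$. Homogeneity of the expansion gives an automorphism sending $x$ to $y$. An automorphism that moves a point of $B$ into $B$ must fix $B$ setwise, so $x$ and $y$ lie in the same orbit. Each orbit inside $B$ is therefore a union of quantifier-free $1$-types, and it is quantifier-free definable. The hypothesis only constrains orbits inside blocks, so this direction needs nothing more.

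For the converse, suppose $\L$ is $sp$-homogeneous and its in-block orbits are quantifier-free definable. Let $\bar p,\bar q$ be tuples with the same quantifier-free type in the expanded language. I would reduce to $sp$-homogeneity by showing that $(\langle\bar p\rangle,\bar p)\cong(\langle\bar q\rangle,\bar q)$. Once that holds, $sp$-homogeneity supplies the automorphism, and since any automorphism preserves the definable predicates, it is an automorphism of the expansion.

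To get the isomorphism of $sp$-hulls, note that the $sp$-hull of $\bar p$ is the union, over the blocks met by $\bar p$, of those whole blocks. It is therefore determined up to isomorphism by three pieces of data:
\begin{enumerate}
\item[(a)] the order and block-equivalence pattern of the coordinates;
\item[(b)] the isomorphism type of each block met;
\item[(c)] the position of each coordinate inside its block, up to automorphism of that block.
\end{enumerate}
Equivalently, by Proposition \ref{prop:coloriffSp}, this is the colored pattern of $\Phi$ together with the offsets.

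Item (c) follows from orbit definability. Given $p_i$ and $q_i$ in blocks $B,B'$, I must first check $B\cong B'$. Here I would argue by the block classification: the predicates $S_i$ and $P_j$ detect whether a point has fewer than $n$ successors or fewer than $m$ predecessors. With $k=n+m+1$, one shows that the quantifier-free $1$-type determines the block type \emph{within the constraints of the $sp$-homogeneous structure}. If two distinct blocks were not distinguished, then orbit definability in the bigger block would fail. This is the step I expect to be the main obstacle, and I would try to obtain it by reading the hypothesis as saying that each orbit is defined by a formula that holds nowhere outside it in $\L$, not merely within the block. With block types matched, the orbit formulas show that $p_i$ and $q_i$ occupy corresponding positions.

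Item (a) follows from the $Adj_k$ relations together with the order. Two coordinates in the same block are at some finite distance $d$, and they are in different blocks exactly when no finite distance holds between them. The relations $Adj_k$ only reach $k<n+m$, so this needs care. I would handle it by noting that positions within the block are already fixed by (c) up to automorphism. For $\omega$, $\omega^*$ and finite blocks, the automorphism group is trivial, so the positions determine the distances, provided the orbits are definable, which forces the relevant positions to be within range. For $\zeta$ blocks, I would use the $Adj$ relations together with the fact that two coordinates lying in distinct $\zeta$-blocks of a shuffle are separated by infinitely many points, so the pattern of orbits and adjacency fixes the configuration.

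With (a)--(c) in hand, the hulls are isomorphic, and $sp$-homogeneity completes the proof.
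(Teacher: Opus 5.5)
\textbf{There is a genuine gap.} Your forward direction is the paper's argument, but you carry it out only for single points, and that choice breaks the converse. The paper reads \emph{automorphism orbits in each block} as orbits of tuples from a block: its forward direction starts from non-automorphic tuples $\bar p,\bar q$ in a block $b$. With points alone the converse is false. For finite $n,m$, the ordering $\L=\zeta$ is $sp$-homogeneous, and its only block is a single point-orbit, defined by $x=x$ even in all of $\L$. Yet for $N\ge n+m$ the pairs $(0,N+1)$ and $(0,N+2)$ satisfy no $Adj_k$ with $k<n+m$, so they have the same quantifier-free type, and they are not automorphic.

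In particular, your treatment of $\zeta$-blocks in (a) cannot work. Distances larger than $n+m$ inside one $\zeta$-block are invisible to the language, and point-orbits carry no information about them. The fix has two parts:
\begin{itemize}
\item Run your forward argument for tuples from $B$. It goes through verbatim, since an automorphism carrying a point of $B$ into $B$ stabilizes $B$.
\item In the converse, apply the hypothesis to the whole sub-tuple of $\bar p$ lying in a given block. That yields (c) together with all relative positions.
\end{itemize}
Then (a) only needs to detect \emph{same block}. The $Adj_k$ do this, because the tuple hypothesis forces every block to be finite of size at most $n+m+1$ whenever $n+m<\infty$, and all $Adj_k$ are available when $n+m=\infty$.

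What is left is (b), and there your remedy changes the hypothesis between the two halves. The forward direction you prove gives definability only \emph{inside} $B$. For the converse you instead assume the defining formula holds nowhere else in $\L$. So no single biconditional is proved.

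Your worry about (b) is well founded, and the paper does not settle it either. Its converse passes from \emph{$\bar p$ and $\bar q$ are automorphic within their blocks} directly to $(\langle\bar p\rangle,\bar p)\cong(\langle\bar q\rangle,\bar q)$, without checking that corresponding coordinates lie in isomorphic blocks. Under the within-block reading that step actually fails. Take $m=0$, $n=\infty$ and $\L=2+\eta$:
\begin{itemize}
\item $\L$ is $sp$-homogeneous.
\item The two points of the $2$-block are separated by their numbers of successors, and all other blocks are singletons, so the hypothesis holds.
\item The last point of the $2$-block and any point of $\eta$ both have no successor, so they have the same quantifier-free type.
\item Every automorphism fixes the former, since it is the successor of the least element. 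So $\L$ is not $C_{\infty,0}$-homogeneous.
\end{itemize}
A correct converse therefore needs an extra condition linking the quantifier-free type of a tuple to the isomorphism type of its block, for instance your global reading applied to tuples. The converse then goes through as you outline, with $sp$-homogeneity doing the gluing. The forward direction must then be proved for that same reading. For finite $n,m$ this follows directly from homogeneity, since quantifier-free types in finitely many variables are then given by single formulas.
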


\begin{proof}
    We begin with the forward direction.
    Say that $\L$ is $C_{n,m}$-homogeneous.
    For the sake of contradiction, say that two non-automorphic tuples $\bar{p}$ and $\bar{q}$ in a block $b$ have the same quantifier-free $C_{n,m}$-diagram in $b$.
    As the quantifier-free $C_{n,m}$-diagram is determined by information local to $b$, this means that these two tuples have the same quantifier-free $C_{n,m}$-diagram in $\L$.
    By $C_{n,m}$-homogeneity, these two tuples are automorphic in $\L$ via the automorphism $\sigma$.
    Because $\sigma(\bar{p})=\bar{q}$ and it is an automorphism, $\sigma$ must restrict to and automorphistm $\sigma:b\to b$.
    This contradicts that $\bar{p}$ and $\bar{q}$ are not automorphic in $b$.
    By Lemma \ref{lem:homogeneitytransfer}, the ordering is also sp-homogeneous, as desired.

    We now demonstrate the backwards direction.
    Say that $\L$ is sp-homogeneous and the automorphism orbits in each block of $\L$ are quantifier-free definable in $\{S_i\}_{i<n}$, $\{P_j\}_{j<m}$ and $\{Adj_k\}_{k<n+m}$.
    Take two tuples $\bar{p}$ and $\bar{q}$ that have the same quantifier-free $C_{n,m}$-diagram.
    By assumption, $\bar{p}$ and $\bar{q}$ are automorphic within their blocks.
    What is the same, $(\langle\bar{p}\rangle,\bar{p})\cong(\langle\bar{q}\rangle,\bar{q})$.
    In particular, by $sp$-homogeneity, they are automorphic.
    Therefore, $\L$ is $C_{n,m}$-homogeneous as desired.
\end{proof}

Putting things together, we obtain the desired result that the notions of $C_{n,m}$-homogeneity approximate $sp$-homogeneity in a precise way.
As $n$ and $m$ tend to infinity, the class of $C_{n,m}$-homogeneous linear orderings approaches the class of $sp$-homogeneous linear orderings in the sense that the language in which $C_{n,m}$-homogeneous linear orderings are homogeneous captures increasing shares of the expressive power of the $sp$ language.

\begin{theorem}\label{thm:CnmClassify}
    \begin{enumerate}
        \item If $n,m<\infty$ then the $C_{n,m}$-homogeneous structures are the sp-homogeneous structures with blocks of size at most $n+m+1$.
        \item If $n=\infty$ and $m<\infty$ then the $C_{n,m}$-homogeneous structures are the sp-homogeneous structures without blocks isomorphic to $\omega$.
        \item If $m=\infty$ and $n<\infty$ then the $C_{n,m}$-homogeneous structures are the sp-homogeneous structures without blocks isomorphic to $\omega^*$.
        \item If $m=n=\infty$ then the $C_{n,m}$-homogeneous structures are exactly the sp-homogeneous structures.
    \end{enumerate}
\end{theorem}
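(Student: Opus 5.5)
The plan is to derive all four items from Proposition~\ref{prop:blockcriteria}. By that result, $\L$ is $C_{n,m}$-homogeneous exactly when it is $sp$-homogeneous and the automorphism orbits of tuples inside each 1-block are quantifier-free definable in $\{S_i\}_{i<n}$, $\{P_j\}_{j<m}$, $\{Adj_k\}_{k<n+m}$. So the theorem reduces to a block-by-block check. For each of the four possible block types (finite $b$, $\omega$, $\omega^*$, $\zeta$) I will decide, for each pair $(n,m)$, whether the orbits in a block of that type are quantifier-free definable. Each item of the theorem is then the conjunction of these local answers.

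First I determine the orbits. The blocks $b$ (finite), $\omega$ and $\omega^*$ are rigid, so their orbits of tuples are single tuples. Definability there amounts to pinning down every individual point by a quantifier-free formula; once each point is pinned, tuples are pinned as well. The automorphisms of $\zeta$ are the shifts, so an orbit of a tuple in $\zeta$ is determined by the ordered list of consecutive distances between its entries. A pair is then definable exactly when its distance is captured by some $Adj_k$ with $k<n+m$, or by the absence of all of them.

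Next I go through the block types.

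\textbf{Finite blocks.} The point in position $i$ of a block of size $b$ has exactly $i$ predecessors and $b-1-i$ successors. It is pinned by a unary predicate if and only if its predecessor count is among those the $P_j$ can name exactly, or its successor count is among those the $S_i$ can name. Requiring this for every $i$ gives a threshold on $b$ of the form $b\le n+m+1$. I expect this bookkeeping to be the main obstacle. The threshold must come out exactly right, and it depends on the precise reading of $S_i$ and $P_j$ (whether they mean ``exactly $i$'' or ``at least $i$'') and on the off-by-one in the indexing. I will fix the convention so that the predicates together recognize $n+m+1$ distinct positions. I will then show both directions: every position of a block of size $\le n+m+1$ is pinned, and a block of size $n+m+2$ contains a middle point whose unary type it shares with a different point of the same block. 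Blocks of that size are therefore excluded. When $n$ or $m$ is infinite, every point of every finite block is pinned.

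\textbf{Infinite blocks.} Each point of an $\omega$ block has finitely many predecessors and infinitely many successors. If $m<\infty$, then all points far enough out share the same unary type and are not automorphic, so the block fails. If $m=\infty$, each point is pinned by the exact number of its predecessors, so the block is fine. The same argument applied to $\omega^*$, with successors in place of predecessors, shows that $\omega^*$ blocks are fine exactly when $n=\infty$. For $\zeta$, single points all form one orbit, so the unary predicates cause no trouble. The issue is pairs. If $n+m<\infty$, two pairs at distances $n+m$ and $n+m+1$ satisfy no $Adj_k$ and have the same order type, yet are not automorphic. If $n+m=\infty$, every finite distance is named by some $Adj_k$, and this distance data determines the shift orbit.

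Finally I assemble the cases. When $n,m<\infty$, only finite blocks of size $\le n+m+1$ survive, which is (1). When $n=\infty$ and $m<\infty$, everything except $\omega$ survives, which is (2). Item (3) is the symmetric case. When $n=m=\infty$, all block types survive, and (4) follows, in agreement with the introduction's claim that $C_{\infty,\infty}$-homogeneity coincides with $sp$-homogeneity.
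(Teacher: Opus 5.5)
You follow the paper's route exactly: reduce to Proposition~\ref{prop:blockcriteria}, then check each block type in isolation. That route has a genuine gap, which the paper's own proof shares. Pinning down every point \emph{inside its own block} does not yield homogeneity. Homogeneity compares tuples anywhere in $\L$, and a point's quantifier-free type records only finitely many unary predicates. So points lying in blocks of \emph{different} isomorphism types can have the same type.

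Concretely, let $(n,m)=(1,0)$ and $\L=Sh(\{1,2\})$. This ordering is $sp$-homogeneous by Theorem~\ref{thm:homogChar}, and all its blocks have size at most $2=n+m+1$, so item (1) asserts it is $C_{1,0}$-homogeneous. But the language has the single unary predicate $S_0$, while $\L$ has three kinds of points: the point of a $1$-block, the first point of a $2$-block, and the last point of a $2$-block. Whatever reading of $S_0$ you take, two of these kinds share a quantifier-free $1$-type. Under the ``exactly $0$ successors'' reading, the $1$-block point and the last point of a $2$-block both satisfy $S_0$. The map between two such points is an isomorphism of one-point substructures. Yet no automorphism extends it, since every order automorphism carries $1$-blocks onto isomorphic $1$-blocks.

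So item (1) is false as stated. So is the ``if'' direction of Proposition~\ref{prop:blockcriteria} under the block-local reading you use. In its proof, the step ``$\bar p$ and $\bar q$ are automorphic within their blocks'' silently assumes that corresponding entries lie in isomorphic blocks.

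The same phenomenon defeats (2) and (3). For $n=\infty$ and $m<\infty$, the last points of an $(m+1)$-block and of an $(m+2)$-block both have exactly $0$ successors and at least $m$ predecessors. Hence $Sh(\{m+1,m+2\})$ is $sp$-homogeneous with no $\omega$-block, but it is not $C_{\infty,m}$-homogeneous; (3) is symmetric. Any correct analysis needs a \emph{global} condition: the quantifier-free type of a point must determine the isomorphism type of its block and its position there, across all blocks occurring in $\L$.

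Item (4) survives precisely because this global condition holds there. With all $S_i,P_j$ available, the $1$-type records the exact predecessor and successor counts. These determine the block type and, for non-$\zeta$ blocks, the position. The full family $Adj_k$ records which entries share a block and at what distance. So the quantifier-free type of $\bar p$ determines $(\langle\bar p\rangle,\bar p)$, and $sp$-homogeneity finishes the argument.

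A smaller issue: under the ``exactly $i$'' reading, your criterion ``pinned by some unary predicate'' gives the threshold $n+m$, not $n+m+1$. The middle point of an $(n+m+1)$-block is isolated only by the conjunction of all the negated predicates. Also, the convention is fixed by the definitions rather than something you may choose.
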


\begin{proof}
    Each of the claims follows from Proposition \ref{prop:blockcriteria}.
    It is direct to check that $\{S_i\}_{i<n}$ and $\{P_j\}_{j<m}$ define every element in a finite block if and only if that block has size at most $n+m+1$.
    Moreover, in a block isomorphic to $\omega$ the set $\{P_j\}_{j<\omega}$ defines all elements, but any $\{S_i\}_{i<n}$ and $\{P_j\}_{j<m}$ with $m$ finite fails to.
    Similarly, in a block isomorphic to $\omega^*$ the set $\{S_j\}_{j<\omega}$ defines all elements, but any $\{S_i\}_{i<n}$ and $\{P_j\}_{j<m}$ with $n$ finite fails to.
    Lastly, blocks isomorphic to $\zeta$ require all of the $Adk_k$ predicates to define the automorphism orbits, so are permissible exactly when $n$ or $m$ is infinite.
\end{proof}

The most intriguing feature of these languages is that they only have finitely many homogeneous models.
This provides an interesting line of inquiry regarding the (finite) combinatorics of these structures that we explore below.

\begin{theorem}
Let $n$ and $m$ be finite.
\begin{enumerate}
    \item There are only finitely many $C_{n,m}$-homogeneous linear orderings.
    \item The number of $C_{n,m}$-homogeneous linear orderings only depends on $k=n+m+1$.
    \item If $I(k)$ is the number of $C_{n,m}$-homogeneous linear orderings with $k=n+m+1$, then $I(k)$ satisfies the following recursive definition.
    Let
    \[K_1(0)=1,K_2(0)=0 \text{ and }\] 
    \[K_1(k+1)= \sum_{i=0}^{k} {k+1 \choose i} \big(K_1(i)+K_2(i)\big) \text{ and } K_2(k+1)=(k+1)K_1(k).\]
    Then
    \[I(k)=\sum_{i=0}^{k} \big(K_1(i)+K_2(i)\big) {k \choose i}.\]
    \item $I(k)$ has the closed form
    \[I(k)=\sum_{m=1}^k{k \choose m}\sum_{n=1}^m\sum_{r=0}^{\lceil \frac{n}2 \rceil}{n-r+1 \choose r}{m \choose r}r!(n-r)!S(m-r,n-r),\]
    Where $S(n,m)$ are the Stirling numbers of the second kind.
    \item $I(k)$ is equal to the number of homogeneous linear orderings in $k$ colors satisfying $T$.
    \item $I(k)$ is equal to the number of models of $T'_k$, where $T'_k$ is the reduction of the theory $T'$ to the language $R_1,\dots,R_k$ and $S_1,\dots,S_k$.
\end{enumerate}
     
\end{theorem}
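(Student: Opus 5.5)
By Theorem \ref{thm:CnmClassify}(1), when $n,m<\infty$ the $C_{n,m}$-homogeneous orderings are exactly the $sp$-homogeneous orderings all of whose blocks are finite of size at most $k=n+m+1$. This description depends only on $k$, which already gives (2). The plan is to push this class through the bijections of Section \ref{sec:colorings} and count the resulting finite objects.

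First, for (5): by Proposition \ref{prop:coloriffSp} and the proposition that $\Phi$ preserves homogeneity, $\Phi$ restricts to a bijection from these orderings onto the homogeneous models of $T$. These are the models using only the colors $c_1,\dots,c_k$, since $w,z,sw$ correspond to $\omega,\zeta,\omega^*$ blocks, which are now excluded.

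Second, for (6): by Proposition \ref{prop:spImage}, $F$ restricts to a bijection from the models of $T'$ using only $R_1,\dots,R_k,S_1,\dots,S_k$ onto these orderings. The only point to check is that $F(\mathcal L)$ has a block of size $i$ exactly when color $i$ occurs in $\mathcal L$.

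Third, for (1): since every color is used at most once (Axioms (5)--(7)) and every point carries at least one color, a model of $T'_k$ has at most $k$ points. Hence there are only finitely many such models, and therefore only finitely many $C_{n,m}$-homogeneous orderings.

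Concretely, a model of $T'_k$ is determined by the following data:
\begin{itemize}
\item a subset $U\subseteq\{1,\dots,k\}$ of colors actually used;
\item a finite sequence of points, each of which is either an $R$-point carrying one color of $U$ or an $S$-point carrying a nonempty set of colors of $U$;
\item the colors of $U$ partitioned among these points;
\item the constraint, from axiom (3b), that no two $R$-points are adjacent.
\end{itemize}
Nothing restricts adjacent $S$-points. Condition (3b) is the only adjacency axiom still relevant, because $w,z,sw$ are absent.

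For (3), define $K_1(j)$ as the number of such sequences using all of $j$ given colors whose last point is an $S$-point, with the empty sequence counted in $K_1(0)=1$. Define $K_2(j)$ likewise for sequences ending in an $R$-point. The recursion comes from deleting the last point.
\begin{itemize}
\item If the last point is an $S$-point with color set of size $k+1-i\ge1$, we choose which $i$ colors go to the prefix in $\binom{k+1}{i}$ ways. The prefix is then arbitrary, contributing $K_1(i)+K_2(i)$.
\item If the last point is an $R$-point, we choose its color in $k+1$ ways. The prefix must not end in an $R$-point, so it contributes $K_1(k)$.
\end{itemize}
Summing over which $i$ of the $k$ colors are used then gives $I(k)=\sum_i\binom{k}{i}(K_1(i)+K_2(i))$.

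For (4), count directly. Fix the number $m$ of used colors, chosen in $\binom km$ ways, the number $n$ of points, and the number $r$ of $R$-points.
\begin{itemize}
\item Placing $r$ pairwise nonadjacent $R$-positions among $n$ positions gives $\binom{n-r+1}{r}$ choices.
\item Assigning distinct colors to the $R$-points gives $\binom mr r!$ choices.
\item Distributing the remaining $m-r$ colors onto the $n-r$ ordered $S$-points, each nonempty, gives $(n-r)!S(m-r,n-r)$ choices.
\end{itemize}
Nonadjacency forces $r\le\lceil n/2\rceil$, and terms outside the valid range vanish. The empty ordering corresponds to the $m=0$ term. It is omitted from the displayed sum, so I expect a convention issue there; I would check it against small cases such as $k=1$.

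The main obstacle I anticipate is bookkeeping rather than ideas. One step is verifying that the correspondence in Proposition \ref{prop:spImage} is genuinely bijective in this restricted setting, for instance that two adjacent $S$-points really give a non-isomorphic ordering from a single merged $S$-point. The other is reconciling boundary conventions such as the empty model and the index ranges between the recursion and the closed form.
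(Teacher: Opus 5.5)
Your proposal is correct and follows essentially the paper's own argument: restrict $F$ and $\Phi$ using Theorem~\ref{thm:CnmClassify}, recurse on an endpoint of a model of $T'_k$, and count the closed form by choosing $R$-positions, then their colors, then ordered Stirling partitions of the remaining colors among the $S$-points. Your suspicion about (4) is justified: as displayed, the closed form omits the empty model and equals $I(k)-1$ (it gives $2, 11, 70$ rather than $3, 12, 71$). Separately, your labeling of $K_1$ (ends in an $S$-point, or is empty) and $K_2$ (ends in an $R$-point) is the consistent one and repairs a swap in the paper's stated interpretation of $K_1$ and $K_2$.
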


\begin{proof}
    Note that (2) follows immediately from Theorem \ref{thm:CnmClassify} and that (1) will follow from (3).
    By Proposition \ref{prop:spImage} and Theorem \ref{thm:CnmClassify}, every $C_{n,m}$-homogeneous linear ordering lies in the image $F(T')$.
    Furthermore, following the definition of $F$, any element with predicate outside of $R_1,\dots,R_k$ and $S_1,\dots,S_k$ results in a block of size at least $k+1$ in $F(T')$.
    Conversely, if only $R_1,\dots,R_k$ and $S_1,\dots,S_k$ are used, only blocks of size at most $k$ are produced in $F(T')$.
    This means that $F$ is a bijection between the models of $T'_k$ and the $C_{n,m}$-homogeneous linear orderings.
    The same analysis for $\Psi$ from Proposition \ref{prop:coloriffSp} shows that the homogeneous linear orderings satisfying $T$ in $k$ colors (only the colors $c_1,\dots,c_k)$ are in bijection with the $C_{n,m}$-homogeneous linear orderings.
    In particular, (5) and (6) follow at once.

    All that remains is demonstrating (3) and (4).
    To accomplish this, we will explicitly count the models of $T'_k$, and appeal to the above analysis to observe that this gives the other desired counts as well.
    We begin with (3).
    We interpret $K_1(i)$ as the number of models of $T'_i$ using all $i$ colors where the first element satisfies some $R_i$, and interpret $K_2(i)$ as the number of models of $T'_i$ using all $i$ colors where the first element satisfies some $S_i$.
    Note that $\A\models T'_{k+1}$ must have finitely many elements; no color can be reused, and every element takes at least one color.
    Therefore, there is always a first element.
    If we are able to demonstrate that $K_1(i)$ and $K_2(i)$ act as we described above, the formula
    \[I(k)=\sum_{i=1}^{k} \big(K_1(i)+K_2(i)\big) {k \choose i},\]
    follows at once.
    This is because a model of $T'_k$ uses $i\leq k$ colors.
    It has ${k \choose i}$ options for that set of colors and $K_1(i)+K_2(i)$ many underlying models for that choice of colors.
    We focus on demonstrating that $K_1$ and $K_2$ are as desired.
    We demonstrate this fact inductively.
    Say that this characterization holds for $i\leq k$.
    Consider a model $\A\models T'_{k+1}$ using all of the colors.
    Say that the first element of $\A$ has $k+1-i$ of the $S_j$ hold of it.
    Note that there are ${k+1 \choose i}$ choices for which of the colors are used.
    The remaining ordering is a model of $T_i'$, of which there are $K_1(i)+K_2(i)$ many by induction.
    Therefore,
    \[K_1(k+1)= \sum_{i=0}^{k} {k+1 \choose i} \big(K_1(i)+K_2(i)\big),\]
    as desired.
    Now consider the case that the first element of $\A$ has some $R_j$ hold of it.
    Note that there are $k+1$ choices for which of the colors is used.
    The remaining ordering is a model of $T_k'$, but it necessarily does not begin with an element with some $R_i$ holding (otherwise, two such points would be adjacent, which is forbidden by our axioms).
    There are $K_1(k)$ many of these models by induction.
    Therefore,
    \[K_2(k+1)=(k+1)K_1(k),\]
    as desired.

    We now demonstrate (4) by counting the models of $T_k'$ directly.
    Each term in the sum
    \[I(k)=\sum_{m=1}^k{k \choose m}\sum_{n=1}^m\sum_{r=0}^{\lceil \frac{n}2 \rceil}{n-r+1 \choose r}{m \choose r}r!(n-r)!S(m-r,n-r),\]
    will be interpreted explicitly.
    Interpret $m$ as the total number of colors used in $T_k'$, and ${k \choose m}$ counts the number of choices for said colors.
    Therefore, it is enough to show that there are
    \[\sum_{n=1}^m\sum_{r=0}^{\lceil \frac{n}2 \rceil}{n-r+1 \choose r}{m \choose r}r!(n-r)!S(m-r,n-r)\]
    many models of $T_m'$ using all of the $m$ colors.
    Interpret $n$ as the number of elements in the model.
    Interpret $r$ as the number of elements satisfying some $R_i$ in the model.
    As these elements cannot be successive, there are at most $\lceil \frac{n}2 \rceil$ such elements.
    The isomorphism type of the model is now determined by
    \begin{enumerate}
        \item A choice of which elements are $R_i$ elements
        \item A choice of the colors used on the $R_i$ elements
        \item A choice of the order of the colors on the $R_i$ elements
        \item A choice of the order of the color sets for the $S_i$ elements
        \item A partition of the colors among the $S_i$ elements.
    \end{enumerate}
    Note that each of these choices is made independently.
    Therefore, we can count each quantity and multiply them together to get the final number of models subject to the parameters $k,m,n,r$.
    The counts are as follows:
    \begin{enumerate}
        \item ${n-r+1 \choose r}$, which is a known formula for the number of length $n$ binary sequences with $r$ ones, no two of which are consecutive.
        \item ${m \choose r}$, by definition
        \item $r!$, by definition
        \item $(n-r)!$, by definition
        \item $S(m-r,n-r)$, as we are splitting $m-r$ ordered colors among $n-r$ ordered points, and the Stirling numbers of the second kind count exactly this quantity.
    \end{enumerate}
    This completes the proof.
\end{proof}

The first few terms for $I(k)$ are given as follows: 3, 12, 71, 558, 5487, 64734, 891039, 14016774, 248057927, 4877703126,
105504350679, 2489510252238, 63638447941551,...

As we saw above in (5), the $C_{n,m}$-homogeneous linear orderings are closely related to the homogeneous linear orderings in $k$ colors.
These theories are of classical interest.
We now analyze the closely related quantity of homogeneous colored linear orderings in $k$ colors without the restrictions on successivities.
In many ways, this sequence of numbers is better behaved.

\begin{theorem}
Let $L(k)$ equal the number of homogeneous linear orderings in $k$ colors.
    \begin{enumerate}
    \item $L(k)$ is equal to the number of models of $T^{'-}_k$, where $T^{'-}_k$ is the reduction of all but Axioms (3b-e) of the theory $T'$ to the language $R_1,\cdots,R_k$ and $S_1,\cdots,S_k$.
    \item $L(k)$ satisfies the following recursive definition.
    Let
    \[J(0)=1 \text{ and } J(k+1)= 2(k+1)J(k)+\sum_{i=2}^{k+1} {k+1 \choose i} J(k+1-i).\]
    Then
    \[L(k)=\sum_{i=1}^{k} J(i) {k \choose i}.\]
    \item $L(k)$ are the coefficients in the exponential generating function
    \[H(x)=\frac{e^x}{2-x-e^x}.\]
    \item Let $W$ denote the product logarithm. Set $Z=2-W(e^2)\approx0.442854$ and $R=\frac{-e^2}{e^{W(e^2)}+e^2}\approx-0.6089389$.
    \[L(k)\sim A(k):=-k!R\Big(\frac1{Z}\Big)^{k+1}.\]
    \item In the limit, the proportion of homogeneous colored linear orderings that use all $k$ colors is $\frac1{W(2)}\approx0.6422007.$
    \end{enumerate}
\end{theorem}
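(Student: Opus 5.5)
The plan is to reduce everything to counting finite combinatorial objects. For (1), I would repeat the argument used for the $C_{n,m}$ count with the corollary on $T''$ in place of Proposition~\ref{prop:spImage}. By that corollary, $H$ is a bijection between models of $T''$ and homogeneous colored linear orderings. A homogeneous ordering in $k$ colors uses only the colors $1,\dots,k$, so it corresponds exactly to a model of $T''$ using only $R_1,\dots,R_k,S_1,\dots,S_k$. Forgetting the adjacency axioms (3b--e) of $T'$ gives precisely $T''$ reduced to this language, which is $T^{'-}_k$. Such a model is finite, since each element carries at least one color and no color is reused. It is therefore determined by three data:
\begin{itemize}
\item the set of colors it actually uses;
\item an ordered partition of that set into blocks;
\item for each singleton block, a choice between $R$ and $S$ (blocks of size $\ge 2$ are forced to be $S$).
\end{itemize}

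For (2), let $J(i)$ be the number of models of $T^{'-}_i$ that use all $i$ colors, and condition on the first element. If it is a singleton, there are $k+1$ choices of color and $2$ choices of $R$ versus $S$, and the remainder contributes $J(k)$. If it is an $S$-element with $i\ge 2$ colors, there are $\binom{k+1}{i}$ choices and the remainder contributes $J(k+1-i)$. This gives the stated recursion. Summing over the set of colors used gives $L(k)=\sum_i \binom{k}{i}J(i)$. The $i=0$ term, the empty ordering, should be included via $J(0)=1$, so that the sum matches $H(0)=1$; I would check this convention against the stated indexing.

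For (3), I would use the exponential formula for sequences of labelled blocks. A single block has EGF $2x+(e^x-1-x)=e^x+x-1$, so the sequences have EGF $\sum_j (e^x+x-1)^j=1/(2-x-e^x)=:\mathcal J(x)$. Choosing which colors are used multiplies this by $e^x$, giving $H(x)=e^x/(2-x-e^x)$. Alternatively, one can verify directly that the recursion in (2) translates into $\mathcal J(x)\,(2-x-e^x)=1$.

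For (4), which I expect to be the only real work, I would use singularity analysis for meromorphic functions. The dominant singularity is the smallest-modulus root of $2-x-e^x=0$. Writing $y=2-x$ turns the equation into $ye^{y}=e^2$, so $y=W(e^2)$ and $x=Z=2-W(e^2)$. I would then argue three things. First, $Z$ is the unique root in $|x|\le Z$: on $|x|=r$ one has $|e^x+x|\le e^r+r$, with equality only at $x=r$, so $|e^x+x|<2$ on the open disc $|x|<Z$ and no other root lies on the circle $|x|=Z$. Second, the root is simple, since the derivative $-1-e^Z$ is nonzero. Third, the next root lies strictly further out, which follows from the same estimate together with isolatedness of zeros. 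Then
\[
H(x)\sim\frac{e^Z}{-(1+e^Z)(x-Z)}=\frac{e^Z}{(1+e^Z)Z}\cdot\frac1{1-x/Z},
\]
so $L(k)/k!\sim \frac{e^Z}{(1+e^Z)}Z^{-k-1}$. Using $e^Z=e^2/e^{W(e^2)}$, this constant simplifies to $-R$ as defined.

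For (5), the same analysis applied to $\mathcal J$ has the same simple pole with residue smaller by the factor $e^{Z}$. Hence $J(k)/L(k)\to e^{-Z}=e^{W(e^2)-2}=1/W(e^2)\approx 0.6422$, matching the numerical value stated in (5); I would reconcile this with the written expression $\frac1{W(2)}$ there, which appears to be a typo for $\frac1{W(e^2)}$.
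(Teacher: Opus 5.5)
Your proposal is correct and follows the paper's route. The paper likewise uses the bijection with finite models of $T''$ on $k$ labels, the first-element recursion, the composition $\frac{1}{1-(e^x+x-1)}$ multiplied by $e^x$, and simple-pole singularity analysis at $Z=2-W(e^2)$. Three of your details improve on the paper. Your triangle-inequality argument that $Z$ is the only root with $|x|\le Z$ is more complete than the paper's bare appeal to Pringsheim's theorem. You are right that the sum in (2) must include the $i=0$ term, which is needed to get $L(1)=3$. You are also right that $\frac1{W(2)}$ in (5) should read $\frac1{W(e^2)}$.
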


\begin{proof}
    Note that (1) follows by the analog of Theorem \ref{thm:homogChar} for colored linear orderings (see \cite{ST08} Lemma 3.1) along with the same argument as in Proposition \ref{prop:spImage}.
    Therefore, it is enough to count the number of models of $T^{'-}_k$.

    We interpret $J(k)$ as the number of models of $T^{'-}_k$ where all $k$ colors are used (i.e., $\A\models \bigwedge_{i<k} \exists x ~R_i(x) \lor S_i(x)$).
    We demonstrate that $J(k)$ satisfies the given recurrence.
    $J(0)=1$ is plainly true, as only the empty linear ordering has no element of any color.
    We now show that 
    \[ J(k+1)= 2(k+1)J(k)+\sum_{i=2}^{k+1} {k+1 \choose i} J(k+1-i).\]
    Note that $\A\models T^{'-}_{k+1}$ must have finitely many elements; no color can be reused, and every element takes at least one color.
    Let $x\in\A$ be the first element, and let $|x|$ be the number of colors applied to $x$.
    Note that $\A-\{x\}$ uses $|x|$ less colors that $\A$.
    Up to relabeling the colors, $\A-\{x\}$ is a model of $T^{'-}_{k+1-|x|}$.
    There are exactly $J(k+1-|x|)$ such models.
    If $|x|=1$, either $R_j$ or $S_j$ may hold of $x$ and it may have one of $k+1$ colors, so there are $2(k+1)J(k)$ models of this form.
    If $|x|=i>1$, then some collection of $i$ many $S_j$ hold of $x$, so there are ${k+1 \choose i}$ models of this form.
    Summing these terms gives the desired recurrence.

    Now we can define $I(k)$ in terms of $J(k)$.
    Say $\A\models T^{'-}_k$ uses $i$ colors.
    There are ${k \choose i}$ sets of $i$ colors to choose from.
    Each of these sets has $J(i)$ many models.
    Thus, there are $J(i) {k \choose i}$ models of $T'_k$ using $i$ colors.
    It follows immediately that 
    \[L(k)=\sum_{i=1}^{k} J(i) {k \choose i}.\]

    We now provide the closed-form solution for $L(k)$ described in (3).
    Let $g_1(x)=e^{x}+x-1$, the exponential generating function counting pure non-empty sets where we allow for two different sets of size 1.
    Let $g_2=\frac1{1-x}$, the exponential generating function counting linear orderings of a finite set.
    Models $\A\models T^{'-}_k$ where all $k$ colors are used can be thought of as a composite $g_2\circ g_1$ structure.
    In particular, the elements of the partition correspond to elements of $\A$.
    These elements are linearly ordered (giving them a global $g_2$ structure), and to each of them we assign a pure set of colors, where single colors can be assigned in two different ways (giving each element of the partition a $g_1$ structure).
    The functional composition principal for exponential generating functions (see \cite[Chapter 2]{AnCom}) gives that if $f(x)$ is the exponential generating function for $J(k)$, then
    \[f(x)= g_2\circ g_1(x)=\frac{1}{1-(e^{x}+x-1)}=\frac{1}{2-x-e^x}. \]
    We now consider an arbitrary $\A\models T'_k$.
    Let $g_3=e^x$, the exponential generating function for counting pure sets.
    These models can be thought of as a $g_3\times f$ product structure.
    In particular, some arbitrary number of colors is first assigned to a set, and the remaining $\ell$ colors are used to build a model of $T'_\ell$.
    The multiplication principle for exponential generating functions (see \cite[Chapter 2]{AnCom}) gives that
    \[H(x)=g_3(x)\times f(x)=\frac{e^x}{2-x-e^x},\]
    as desired.

    We now conduct the asymptotic analysis from (4).
    This follows from the general method of Coefficient Asymptotics for meromorphic functions (see, for example, \cite[Chapter VI.5.2]{AnCom}).
    Specifically, note that $H(x)$ is a meromorphic function with simple poles where $2-e^x-x=0$.
    The solutions for this equation are given by $x=2-W_k(e^2)$, and it has a unique real valued solution given by $x=2-W_0(e^2)= 2-W(e^2)=Z$.
    As $H(x)$ has positive Taylor coefficients, by Pringsheim’s Theorem (\cite{AnCom} Theorem IV.6), $Z$ is the dominant pole for $H(x)$.
    An explicit, standard calculation gives that $R=\frac{-e^2}{e^{W(e^2)}+e^2}$ is the residue at this pole.
    In particular, at $Z$, $H(x)\sim R(x-Z)^{-1}$.
    In coefficients, we obtain that
    \[[x^k]H(x)\sim[x^k]R(x-Z)^{-1} \implies \frac{L(k)}{k!}\sim R\Big(\frac1{Z}\Big)^{k+1},\]
    so $L(k)\sim A(k)$ as desired.

    We lastly prove the claim from (5).
    We saw above that $J(k)$ counts the number of homogeneous colored linear orderings that use all $k$ colors.
    Furthermore,
    \[f(x)=\frac1{2-x-e^x}\]
    is an exponential generating function for $J(k)$.
    Crucially, $f(x)$ has the same poles as $H(x)$, so $Z$ is the dominant pole for $f(x)$ as well.
    The residue at $Z$ for $f(x)$ can be explicitly calculated to be $S=\frac{-e^{W(e^2)}}{e^{W(e^2)}+e^2}$.
    Analogous to the above, we have that $f(x)\sim S(1-Z)^{-1}$ so
    \[[x^k]f(x)\sim[x^k]S(1-Z)^{-1} \implies \frac{J(k)}{k!}\sim S\Big(\frac1{Z}\Big)^{k+1}.\]
    This immediately yields that,
    \[\frac{J(k)}{I(k)}\sim \frac{S}{R}=\frac{e^{W(e^2)}}{e^2}=\frac1{W(e^2)}\approx0.6422007,\]
    as desired.
\end{proof}

The first few terms for $L(k)$ are given as follows: 1, 3, 14, 95, 858, 9687, 131244, 2074515, 37475342, 761600375,17197534296, 427167206259, 11574924994554,...
The first few terms for $A(k)$ are approximately given by 1.37496, 3.10493, 14.0224, 94.9907, 857.986,...
According to numerical observations, $\frac{L(k)}{A(k)}$ converges to $1$ quite quickly.
For example, after ten terms, $\frac{L(k)}{A(k)}$ agrees with $1$ up to ten decimal places.
This follows the general principle that the error for this sort of approximation ought to be at most exponential (\cite{AnCom} Theorem IV.10) while our sequences of interest grow far more quickly than that rate.

Lastly, we note that, in the limit, there are very few sp-homogeneous linear orderings when compared to homogeneous colored linear orderings. 

\begin{proposition}
    $I(k)$ is $O(k!2.123^k)$. In particular, $\lim_{k\to\infty}\frac{I(k)}{L(k)}=0$.
\end{proposition}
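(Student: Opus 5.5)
Our plan is to get an exponential generating function for $I(k)$ from the recursion in part (3) of the counting theorem for $C_{n,m}$-homogeneous orderings, and then bound its coefficients by the location of its dominant singularity, exactly as was done for $L(k)$.

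First we find the generating functions. Let $K(x)=\sum_k (K_1(k)+K_2(k))\frac{x^k}{k!}$. By the combinatorial reading of $K_1,K_2$, a model of $T'_k$ using all $k$ colors is a finite word of blocks. Each block is either a nonempty set of $S$-colors, which contributes $e^x-1$, or a single $R$-color, which contributes $x$, with the rule that no two $R$-blocks are adjacent. Write $u=e^x-1$ for an $S$-block and $v=x$ for an $R$-block. Words in $\{u,v\}$ with no two consecutive $v$'s have the rational generating function
\[
\frac{1+v}{1-u-uv}.
\]
Substituting gives
\[
K(x)=\frac{1+x}{1-(e^x-1)(1+x)}.
\]
The outer binomial sum $I(k)=\sum_i\binom ki(K_1(i)+K_2(i))$ then yields
\[
\sum_k I(k)\frac{x^k}{k!}=e^xK(x).
\]
As a check, we will compare the first coefficients with $3,12,71,\dots$ to confirm the sign conventions. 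If there is a mismatch, we fall back on the recursion directly: $K_1$ corresponds to $1+xK_2$-type splitting and $K_2$ to $(e^x-1)K$.

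Next we locate the dominant singularity. The denominator is $D(x)=1-(e^x-1)(1+x)$. On $[0,\infty)$ it is strictly decreasing from $D(0)=1$, so it has a unique positive real root $\rho$. The coefficients are positive, so Pringsheim's theorem (as used in the proof for $L(k)$) makes $\rho$ the dominant singularity. Here $e^x$ is entire and the numerator $(1+x)e^x$ does not vanish at $\rho$, so the singularity is a simple pole. Numerically, solving $(e^\rho-1)(1+\rho)=1$ gives $\rho\approx 0.4711$ and $1/\rho\approx 2.1226<2.123$.

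The meromorphic coefficient asymptotics then give
\[
\frac{I(k)}{k!}\sim C\rho^{-k-1},
\]
and hence $I(k)=O(k!\,2.123^k)$. For the limit, we use the earlier estimate $L(k)\sim -k!R\,Z^{-k-1}$ with $1/Z\approx 2.258$. Then
\[
\frac{I(k)}{L(k)}=O\big((2.123\,Z)^k\big)\to 0,
\]
since $2.123\cdot 0.442854<1$.

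We expect the main obstacle to be the first step: translating the recursion faithfully into the rational expression above, including the boundary cases where the word is empty or ends in an $R$-block. The rigorous numerical bound $\rho>1/2.123$ is the second delicate point. We would certify it by evaluating $D(1/2.123)>0$ explicitly: $e^{0.47103}-1\approx 0.6017$, and $0.6017\cdot1.47103\approx0.8851<1$. Since $D$ is decreasing and positive at $1/2.123$, its root $\rho$ must exceed $1/2.123$.
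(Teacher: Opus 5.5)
Your argument is correct, and it takes a genuinely different and sharper route than the paper. The paper never finds a generating function for $I(k)$. Instead it overcounts models of $T'$ by splitting each one into three pieces: an ordered set partition of the $S$-colors (a model of $T^*$, with EGF $1/(2-e^x)$ and growth $k!(1/\ln 2)^k$), a permutation of the $R$-colors, and a $0/1$ placement pattern bounded by $\binom{m-k+1}{k}$. It then uses entropy estimates for binomial coefficients and a one-variable maximization to reach $2.12243$. You instead count the models of $T'_k$ using all colors exactly, as labelled words in $S$-blocks (EGF $e^x-1$) and $R$-blocks (EGF $x$) with no two $R$-blocks adjacent. This gives the exact EGF $e^x(1+x)/\bigl(1-(e^x-1)(1+x)\bigr)$. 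It agrees with the paper's recursion, which in EGF form reads $K_1=1+(e^x-1)K$ and $K_2=xK_1$, and it reproduces $3,12,71$, so no fallback is needed. For the $O$-bound you only need the radius of convergence to exceed $1/2.123$. Your evaluation $D(1/2.123)\approx0.115>0$, the monotonicity of $D$ on $[0,\infty)$, and Pringsheim give exactly that. Alternatively, $|(e^x-1)(1+x)|\le(e^{|x|}-1)(1+|x|)<1$ for $|x|<\rho$ rules out complex zeros directly, and the strict version of this inequality on $|x|=\rho$, $x\neq\rho$, makes $\rho$ the unique dominant pole, which justifies your ``$\sim$''. The limit $I(k)/L(k)\to0$ then follows as you wrote. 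The paper's route only saves building the restricted-word generating function. It loses precision because $\binom{m-k+1}{k}$ counts placements among $m-k$ slots rather than the actual number of $S$-points.

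One correction: your stated root $\rho\approx0.4711$ is wrong, and it contradicts your own check, which shows $D(0.47103)\approx0.115\neq0$. In fact $D(0.5)=1-1.5(e^{0.5}-1)\approx0.027>0$ and $D(0.51)\approx-0.005<0$, so $\rho\approx0.5086$ and $1/\rho\approx1.966$. This does not affect the proposition, since you only use $\rho>1/2.123$. But it means your method proves much more: $I(k)\sim C\,k!\,\rho^{-k}$ with $1/\rho\approx1.966$. This is consistent with the paper's data, e.g.\ $I(13)/(13\,I(12))\approx1.9664$. Hence $I(k)=O(k!\,2^k)$, which answers the paper's closing question affirmatively.
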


\begin{proof}
    Let $J(k)$ be the function that counts the models of the following theory, $T^*_k$.
    Let $T^*_k$ be the theory that is defined by the following axioms in the language $(<,\{S_i\}_{i\in k})$. All of the $S_i$ are unary predicates.
    \begin{enumerate}
        \item $<$ is a linear ordering.
        \item $\forall x ~ \bigvee_{i\in k} S_i(x)$
        \item For each $i\in k$, $\forall x,y ~ \big(S_i(x)\land S_i(y)\big)\to x=y$.
    \end{enumerate}
    Consider a $C_{n,m}$ homogeneous linear ordering $L$ with $n+m+1=k$.
    The multicolored ordering $F(L)$ is a model of $T'$.
    $T^*_k$ is constructed explicitly so that its models are exactly the $S_i$ points in such an $F(L)$.
    In particular, $F(L)$ is determined by
    \begin{enumerate}
        \item a model of $T^*_k$, $\M$;
        \item a finite $k$-colored linear ordering with no repeated colors, $\N$;
        \item the assurance that the set of labels used in $\M$ and $\N$ do not intersect;
        \item an assignment $a:|M|+|N|\to 2$ such that $a^{-1}(0)$ has size $|M|$, $a^{-1}(1)$ has size $|N|$ and no two adjacent elements are assigned $1$.
    \end{enumerate} 
    To be explicit, the points $x\in F(L)$ with some $R_i(x)$ assigned are the points with $a(x)=1$, and the points with some $S_i(x)$ assigned are the points with $a(x)=0$.
    The two models in (1) and (2) are constructed by restricting attention to the $a(x)=0$ and $a(x)=1$ points, respectively.

    We now estimate $J(k)$.
    Models of $T^*_k$ are exactly linearly ordered non-empty sets.
    The size of non-empty sets is exponentially generated by $f(x)=e^x-1$ while the size of linear orderings is exponentially generated by $g(x)=\frac1{1-x}$.
    The functional composition principal for exponential generating functions (see \cite[Chapter 2]{AnCom}) yields that if $h(x)$ is the exponential generating function for $J(k)$ then
    \[h(x)=g\circ f(x)=\frac{1}{2-e^x}.\]
    The general method of Coefficient Asymptotics for meromorphic functions applies.
    $h(x)$ has simple poles where $e^x=2$.
    As $h(x)$ has positive, real Taylor coefficients, Pringsheim's Theorem (\cite{AnCom} Theorem VI.6) states that the smallest real pole, $ln(2)$, is the dominant pole for $h(x)$.
    In particular $h(x)\sim R(x-ln(2))^{-1}$ where $R$ is the residue for $h(x)$ at $x=ln(2)$.
    In coefficients,
    \[[x^k]h(x)\sim[x^k]R(x-ln(2))^{-1} \implies \frac{J(k)}{k!}\sim R\Big(\frac1{ln(2)}\Big)^{k+1}.\]
    For this calculation, it is enough to note that this yields that $J(k)$ is $O\Big(k!\big(\frac1{ln(2)}\big)^k\Big)$.

    Note that the number of uniquely labeled linear orderings using all $k$ colors is $k!$.
    Moreover, the number of length $r$ sequences of zeroes and ones with $s$ ones and no two adjacent ones is exactly ${r-s+1 \choose s}$.
    Let $\hat{I}(m)$ denote the number of models of $T'$ using exactly the first $m$ labels.
    The decomposition of a model of $T'$ outlined above gives that
    \[ \hat{I}(m)\leq \sum_{k=0}^{\lceil \frac{m}2\rceil} k!J(m-k){m \choose k}{m-k+1 \choose k}.\]
    To be more explicit,
    \begin{itemize}
        \item $k$ is the number of ones in $a$;
        \item the ${m \choose k}$ decides which labels will be used for the $a^{-1}(0)$ points versus the $a^{-1}(1)$ points;
        \item $k!$ counts the number of labeled linear orderings that can be placed on the $a^{-1}(1)$;
        \item $J(m-k)$ gives the number of models of $T^*_{m-k}$;
        \item ${m-k+1 \choose k}$ gives an upper bound for the number of ways to give an assignment $a$ for fixed $k$-uniquely labeled linear ordering and fixed model of $T^*_{m-k}$ (it is only an upper bound because the size of the model of $T^*_{m-k}$ may be smaller than $m-k$).
    \end{itemize} 
    We now asymptotically analyze 
    \[\sum_{k=0}^{\lceil \frac{m}2\rceil} k!J(m-k){m \choose k}{m-k+1 \choose k}=O\Bigg(\sum_{k=0}^{\lceil \frac{m}2\rceil}k!\Big(\frac1{ln(2)}\Big)^{m-k}(m-k)!{m \choose k}{m-k+1 \choose k}\Bigg).\]
    Let us focus on the quantity inside the summand
    \[k!\big(\frac1{ln(2)}\big)^{m-k}(m-k)!{m \choose k}{m-k+1 \choose k}=\Big(\frac1{ln(2)}\Big)^{m-k}m!{m-k+1 \choose k}.\]
    As the $m!$ is constant among choices of $k$, we focus on
    \[R(m,k)=\Big(\frac1{ln(2)}\Big)^{m-k}{m-k+1 \choose k}.\]
    Let us parameterize $k=pm$ so that $m-k=(1-p)m$.
    This gives that,
    \[R(m,p)=\Big(\frac1{ln(2)}\Big)^{(1-p)m}{(1-p)m+1 \choose pm}. \]
    Using the approximation for Binomial coefficients where the terms are linearly related, in this case by a limiting factor of $\frac{p}{1-p}$ (see e.g. \cite{asymptopia} Chapter 5.3), we get that
    \[R(m,p)=O\Bigg(\Big(\frac1{ln(2)}\Big)^{(1-p)m}2^{m(1-p)H(\frac{p}{1-p})}\Bigg)=O\Bigg(\Big(\Big(\frac1{ln(2)}\Big)^{(1-p)}2^{(1-p)H(\frac{p}{1-p})}\Big)^m\Bigg). \]
    Where $H(x)=-x\log_2(x)-(1-x)\log_2(1-x)$.
    We now give an upper bound for
    \[\Big(\frac1{ln(2)}\Big)^{(1-p)}2^{(1-p)H(\frac{p}{1-p})}.\]
    We accomplish this by maximizing this function.
    This single variable optimization problem can be explicitly (though admittedly exhaustively) solved using elementary methods of calculus.
    The maximum is at $p=\frac{-1 + \sqrt{1 + 4 \ln(2)}}{2 \sqrt{1 + 4\ln(2)}}$ yielding an explicit maximum approximated by $2.12243$.
    \begin{figure}
        \centering
        \includegraphics[width=0.5\linewidth]{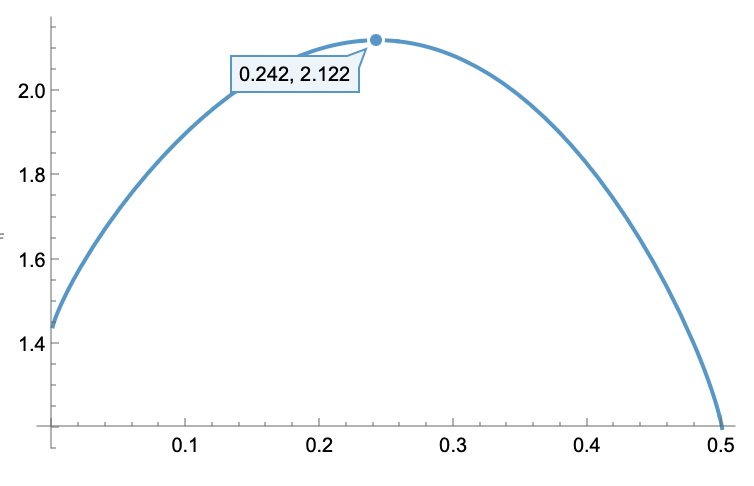}
        \caption{A graph for $\Big(\frac1{ln(2)}\Big)^{(1-p)}2^{(1-p)H(\frac{p}{1-p})}$ from $0$ to $\frac12$ with special attention on its maximum.}
        \label{fig:maxfunction}
    \end{figure}
    This can be confirmed by any sufficiently powerful computer algebra system and is visually represented in Figure \ref{fig:maxfunction}.
    This gives that
    \[R(m,p)=O(2.123^m)\]
    regardless of the value of $p$ and therefore the value of $k$.
    This gives that
    \[\hat{I}(k)=O\Bigg(m!\sum_{k=0}^{\lceil \frac{m}2\rceil}2.123^m\Bigg)=O(m!2.123^m).\]
    Finally, this gives that
    \[I(k)=\sum_{m=1}^k {k \choose m}\hat{I}(k) = O\Bigg(\sum_{m=1}^k {k \choose m}m!2.123^m \Bigg)=O\Bigg(k!\sum_{m=1}^k \frac{1}{(k-m)!}2.123^m \Bigg)=O(k!2.123^k),\]
    as desired.
    To obtain the desired limiting relation, note that $-\frac1Z$ from the previous theorem is about $2.258$, so the exponential factor for $L(k)$ is greater than that for $I(K)$.
\end{proof}

It is not known if the above bound is the best possible.
Given the techniques involved in proving it, this seems unlikely.
We conclude with an open question along these lines.

\begin{question}
    Does $I(k)=O(k!2^k)$?
\end{question}

\bibliographystyle{amsplain}
\bibliography{dclo}

\end{document}